\newcommand{\bbC}{{\mathbb{C}}}
\newcommand{\bbD}{{\mathbb{D}}}
\newcommand{\bbK}{{\mathbb{K}}}
\newcommand{\bbR}{{\mathbb{R}}}
\newcommand{\bbZ}{{\mathbb{Z}}}
\newcommand{\fre}{{\mathfrak{e}}}
\newcommand{\frA}{{\mathfrak{A}}}
\newcommand{\x}{{\mathbf{x}}}
\newcommand{\calC}{{\mathcal{C}}}
\newcommand{\calG}{{\mathcal G}}
\newcommand{\calH}{{\mathcal H}}
\newcommand{\bdone}{{\boldsymbol{1}}}
\newcommand{\lb}{\label}
\newcommand{\wti}{\widetilde  }
\newcommand{\bi}{\bibitem}
\newcommand{\beq}{\begin{equation}}
\newcommand{\eeq}{\end{equation}}
\newcommand{\ba}{\begin{align}}
\newcommand{\ea}{\end{align}}
\newcounter{smalllist}
\newcommand{\comm}[1]{}
\numberwithin{equation}{section}
\newtheorem{theorem}{Theorem}[section]
\newtheorem*{p2.1}{Proposition 2.1}
\newtheorem{proposition}[theorem]{Proposition}
\newtheorem{lemma}[theorem]{Lemma}
\theoremstyle{definition}
\newtheorem*{remark}{Remark}
\newtheorem*{remarks}{Remarks}
\newcommand{\jap}[1]{\langle #1 \rangle}
\newcommand{\norm}[1]{\lVert#1\rVert}
\begin{document}

\title[Chebyshev Polynomials, II]{Asymptotics of Chebyshev Polynomials,\\ II. DCT Subsets of $\bbR$}
\author[J.~S.~Christiansen, B.~Simon, P.~Yuditskii and M.~Zinchenko]{Jacob S.~Christiansen$^{1,5}$, Barry Simon$^{2,6}$, Peter Yuditskii$^{3,7}$\\and
Maxim~Zinchenko$^{4,8}$}

\thanks{$^1$ Centre for Mathematical Sciences, Lund University, Box 118, 22100 Lund, Sweden.
 E-mail: stordal@maths.lth.se}

\thanks{$^2$ Departments of Mathematics and Physics, Mathematics 253-37, California Institute of Technology, Pasadena, CA 91125.
E-mail: bsimon@caltech.edu}

\thanks{$^3$ Institute for Analysis, Johannes Kepler University Linz, 4040 Linz, Austria
E-mail: Petro.Yudytskiy@jku.at}

\thanks{$^4$ Department of Mathematics and Statistics, University of New Mexico,
Albuquerque, NM 87131, USA; E-mail: maxim@math.unm.edu}

\thanks{$^5$ Research supported in part by Project Grant DFF-4181-00502 from the Danish Council for Independent Research.}

\thanks{$^6$ Research supported in part by NSF grants DMS-1265592 and DMS-1665526 and in part by Israeli BSF Grant No. 2014337.}

\thanks{$^7$ Research supported by the Austrian Science Fund FWF, project no: P29363-N32.}

\thanks{$^8$ Research supported in part by Simons Foundation grant CGM--281971.}

\

\date{\today}
\keywords{Chebyshev polynomials, Widom conjecture, Parreau--Widom set, Direct Cauchy Theorem, Totik--Widom bound}
\subjclass[2010]{41A50, 30E15, 30C10}

\begin{abstract}  We prove Szeg\H{o}--Widom asymptotics for the Chebyshev polynomials of a compact subset of $\bbR$ which is regular for potential theory and obeys the Parreau--Widom and DCT conditions.
\end{abstract}

\maketitle

\section{Introduction} \lb{s1}

Let $\fre \subset \bbR$ be a compact subset with logarithmic capacity $C(\fre) > 0$.  Define
\begin{equation}\label{1.1}
  \norm{f}_\fre = \sup_{x \in \fre} |f(x)|
\end{equation}
The Chebyshev polynomial, $T_n(z)$, is the monic polynomial with
\begin{equation}\label{1.2}
  t_n \equiv \norm{T_n}_\fre = \inf \{\norm{P}_\fre \, | \, \deg P = n, P \textrm{ monic}\}
\end{equation}
It is a consequence of the alternation theorem (a result of Borel \cite{Borel} and Markov \cite{Markov} using ideas that go back to Chebyshev; see \cite{CSZ1} for a statement and proof) that $T_n$ is unique and that
\begin{equation}\label{1.3}
  \fre_n \equiv T_n^{-1}([-t_n,t_n]) = \{z \in \bbC \,|\, -t_n \le T_n(z) \le t_n\}
\end{equation}
is a subset of $\bbR$.  Clearly, by definition of $t_n$,
\begin{equation}\label{1.4}
  \fre \subset \fre_n
\end{equation}

Recall that the Green's function, $G_\fre(z)$, is the unique function on $\bbC$ which is positive and harmonic on $\bbC \setminus \fre$, upper semicontinuous on $\bbC$, so that $G_\fre(z) = \log(|z|) + \textrm{O}(1)$ near $z = \infty$ and so that $G_\fre(x) = 0$ for quasi-every $x \in \fre$.  A set, $\fre$, is called regular (for potential theory) if $G_\fre(x) = 0$ for all $x \in \fre$ (which implies that $G_\fre$ is continuous on $\bbC$).  We'll assume that $\fre$ is regular.  One has that near infinity
\begin{equation}\label{1.5}
  G_\fre(z) = \log(|z|) - \log(C(\fre)) + \textrm{O}(1/|z|)
\end{equation}
Moreover, if $d\rho_\fre$ is the potential theoretic equilibrium measure for $\fre$, then
\begin{equation}\label{1.6}
  G_\fre(z) = -\log(C(\fre)) + \int \log(|z-x|) d\rho_\fre(x)
\end{equation}
For more on potential theory, see \cite[Section 3.6]{HA}.

It is not hard to see (see \cite{CSZ1}) that the Green's function, $G_n$, for $\fre_n$ is
\begin{equation}\label{1.5a}
G_n(z) = \frac{1}{n} \log\left(\left|\frac{T_n(z)}{t_n}+i\sqrt{1-\left(\frac{T_n(z)}{t_n}\right)^2}\right|\right)
\end{equation}
which implies that
\begin{equation}\label{1.6a}
  t_n = 2(C(\fre_n))^n
\end{equation}
In particular, since $C(\fre) \le C(\fre_n)$, we get Schiefermayr's bound \cite{Schie}
\begin{equation}\label{1.7}
  t_n \ge 2(C(\fre))^n
\end{equation}

In \cite{CSZ1}, we introduced the term \emph{Totik--Widom bound} (after \cite{Totik09, Widom}) if for some constant $D$, one has that
\begin{equation}\label{1.8}
  t_n \le D(C(\fre))^n
\end{equation}

A compact set $\fre \subset \bbC$ is said to obey a Parreau--Widom (PW) condition (after \cite{Parreau, Widom71}) if and only if
\begin{equation}\label{1.9}
  PW(\fre) \equiv \sum_{z_j \in \calC} G_\fre(z_j) < \infty
\end{equation}
where $\calC$ is the set of points, $z_j$, where $\nabla G_\fre(z_j) =0$.  For regular subsets of $\bbR$, all these critical points are real and there is exactly one such point in each bounded open component, $K_j$, of $\bbR \setminus \fre$ and $G_\fre(z_j) = \max_{x \in K_j} G_\fre(x)$.

In \cite{CSZ1}, we proved that if $\fre \subset \bbR$ is a regular PW set, then one has an explicit Totik--Widom bound
\begin{equation}\label{1.10}
  t_n \le 2 \exp{(PW(\fre))} (C(\fre))^n
\end{equation}

Our methods there say nothing about the complex case.  In this regard, we mention the recent interesting paper of Andrievskii \cite{Andr} who has proven Totik--Widom bounds for a class of sets that, for example, includes the Koch snowflake.

One of our results in this paper (see Theorem \ref{T1.4} and Section \ref{s2}) will be a kind of weak converse -- under an additional condition on $\fre$ which should hold generically, if $\fre \subset \bbC$ is compact, regular and obeys a Totik--Widom bound, then $\fre$ is a PW set.

For a general positive capacity, regular, compact set $\fre \subset \bbC$, we define $\Omega$ to be its complement in the Riemann sphere, i.e.,
\begin{equation}\label{1.11}
  \Omega = (\bbC \cup \{\infty\}) \setminus \fre
\end{equation}
which we suppose is connected (this always holds if $\fre \subset \bbR$).  We let $\wti{\Omega}$ be its universal cover and $\pi: \wti{\Omega} \to \Omega$ the covering map.  It is a consequence of the uniformization theorem (see \cite[Section 8.7]{BCA}) that $\wti{\Omega}$ is conformally equivalent to the disk, $\bbD$, a fact we will use.  We denote by $\x: \bbD \to \Omega$ the unique covering map normalized by $\x(0) = \infty$ and near $z=0$, $\x(z) = Dz^{-1}+\textrm{O}(1)$ with $D>0$.

There is an important multivalued analytic function, $B_\fre(z)$, on $\Omega$ determined by
\begin{equation}\label{1.12}
  |B_\fre(z)| = e^{-G_\fre(z)}
\end{equation}
and that near $\infty$,
\begin{equation}\label{1.13}
  B_\fre(z) = C(\fre)z^{-1} + \textrm{O}(z^{-2})
\end{equation}
One way of constructing it is to use the fact that $-G_\fre$ has a harmonic conjugate locally so that locally on $\bbC \setminus \fre$, it is the real part of an analytic function whose exponential is $B_\fre(z)$.  It is easy to see that this allows $B_\fre$ to be continued along any curve in $\wti{\Omega}$ so by the monodromy theorem (\cite[Section 11.2]{BCA}), $B_\fre(z)$ has an analytic continuation to $\wti{\Omega}$ which defines a multivalued analytic function on $\Omega$.

By analyticity, \eqref{1.12} holds for all branches of $B_\fre(z)$.  In particular, going around a closed curve, $\gamma$, can only change $B_\fre$ by a phase factor which implies there is a character, $\chi_\fre$, of the fundamental group, $\pi_1(\Omega)$, so that going around $\gamma$ changes $B_\fre$ by $\chi_\fre([\gamma])$.  It is not hard to see (\cite[Theorem 2.7]{CSZ1}) that
\begin{equation}\label{1.14}
  \chi_\fre(\gamma) = \exp\left(-2\pi i \int_\fre N(\gamma,x) d\rho_\fre(x)\right)
\end{equation}
where $N(\gamma,x)$ is the winding number for the curve $\gamma$ about $x$.  Thus $B_\fre$ is a character automorphic function.

An alternate construction is to consider elementary Blaschke factors $b(z,w) (=(\bar{w}/|w|)[(w-z)/(1-\bar{w}z)]$ if $w \ne 0$) for $z,w \in \bbD$.  Then, lifted to $\bbD$,
\begin{equation}\label{1.15}
  B_\fre(z) = \prod_{\{w_j\,|\,\x(w_j)=\infty\}} b(z,w_j)
\end{equation}
We will call $B_\fre$ the \emph{canonical Blaschke product for} $\fre$ and $\chi_\fre$, \emph{the canonical character}.

Similarly, we can define for each $w \in \Omega$, $B_\fre(z,w)$ either by using \eqref{1.15} with $\{w_j\,|\,\x(w_j)=\infty\}$ replaced by $\{w_j\,|\,\x(w_j)=w\}$ or by using the Green's function $G_\fre(z,w)$ with pole at $w$ and demanding that $|B_\fre(z,w)| = \exp(-G_\fre(z,w))$ and fixing the phase by demanding that $B_\fre(\infty,w) > 0$.

One can consider character automorphic functions for general characters, $\chi \in \pi_1(\Omega)^*$, the full character group.  In this regard the following theorem of Widom \cite{Widom71} (see also Hasumi \cite[Theorem 5.2B]{Hasu}) is important:

\begin{theorem} \lb{T1.1} (Widom) Suppose that $\fre$ is a compact set regular for potential theory. Then $\fre$ is a PW set if and only if for every character, $\chi \in \pi_1(\Omega)^*$, there is a non-zero analytic $\chi$-automorphic function on $\wti{\Omega}$ which is bounded.
\end{theorem}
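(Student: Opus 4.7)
\emph{Proof plan.} The plan is to transfer everything to the unit disk via the universal covering $\x : \bbD \to \Omega$. Under this identification the deck transformation group $\Gamma\subset\textrm{Aut}(\bbD)$ with $\wti\Omega/\Gamma=\Omega$ is isomorphic to $\pi_1(\Omega)$, and a bounded analytic $\chi$-automorphic function on $\wti\Omega$ corresponds to an $f\in H^\infty(\bbD)$ satisfying $f\circ\gamma=\chi(\gamma)\,f$ for every $\gamma\in\Gamma$. The theorem therefore asks when this subspace $H^\infty(\bbD,\chi)$ is non-trivial for every character. The natural building blocks are the canonical Blaschke product $B_\fre$ (with character $\chi_\fre$) and, for each critical point $z_j\in\calC$, the Blaschke product $B_j:=B_\fre(\cdot,z_j)$ with character $\chi_j$; each satisfies $|B_j|\le1$ on $\bbD$ and $|B_j(0)|=e^{-G_\fre(z_j)}$.

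For the ``if'' direction (PW $\Rightarrow$ every $\chi$ realizable), the crucial analytic input is that under \eqref{1.9} any formal product $\prod_j B_j^{m_j}$ with $m_j\in\bbZ_{\ge0}$ converges in $H^\infty(\bbD)$ to a non-zero function, because its Blaschke sum (obtained by summing $-\log|B_j(0)|$ over the $\Gamma$-orbits) equals $\sum_j m_j G_\fre(z_j)<\infty$. Thus the sub-semigroup of $\pi_1(\Omega)^*$ generated by $\chi_\fre$ and the $\chi_j$'s is realized. To reach an arbitrary $\chi$, I would employ an outer-function construction: write $\chi=\chi'\chi''$ where $\chi'$ is matched by a Blaschke-type product from the building blocks and $\chi''$ by an outer function $\exp(\int P\,d\mu)$ on $\bbD$ whose boundary measure $\mu$ is designed so that its harmonic conjugate picks up exactly the residual phase under each $\gamma\in\Gamma$. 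Alternatively, one can cast this as an extremal problem, maximizing $|f(0)|$ over the unit ball of $H^\infty(\bbD,\chi)$ and showing the extremum is positive by upper semicontinuity in $\chi$ together with the non-vanishing of the extremum on the dense set of realized characters.

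For the converse (realization for all $\chi$ $\Rightarrow$ PW), the strategy is to apply the hypothesis to a carefully chosen character and read off the Blaschke condition. Given a non-zero $f\in H^\infty(\bbD,\chi)$, the equivariance $f\circ\gamma=\chi(\gamma)f$ forces the zero set of $f$ to be $\Gamma$-invariant, so its Riesz--Blaschke factorization on $\bbD$ contains full $\Gamma$-orbits. Choosing $\chi$ so that any realizer must vanish at (a preimage of) each critical point $z_j$, the convergence of the resulting Blaschke product yields $\sum_j G_\fre(z_j)<\infty$. Producing such a ``test character'' requires exploiting the freedom in $\pi_1(\Omega)^*$ —- e.g., using a limiting/compactness argument over finite sub-collections of critical points and invoking the hypothesis for each.

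The main obstacle I expect is the surjectivity step in the ``if'' direction: passing from the sub-semigroup generated by the $\chi_j$'s and $\chi_\fre$ to every $\chi$ in the compact abelian group $\pi_1(\Omega)^*$. This is where the genuine outer-function / extremal-problem machinery enters, and where the equivalence between the analytic PW condition \eqref{1.9} and the richness of the Widom spaces $H^\infty(\bbD,\chi)$ must be pinned down precisely.
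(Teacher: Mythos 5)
First, note that the paper itself does not prove Theorem \ref{T1.1}: it is quoted with citations to Widom \cite{Widom71} and Hasumi \cite{Hasu}, so there is no in-paper argument to compare against. As a standalone attempt, your proposal contains a genuine gap in the crucial direction (PW $\Rightarrow$ every character realizable).

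The fallback extremal-problem argument cannot work as described. You want to deduce $Q_\chi(\infty) > 0$ for all $\chi$ from positivity on a dense set of realized characters together with \emph{upper} semicontinuity of $\chi \mapsto Q_\chi(\infty)$. But upper semicontinuity gives $\limsup_{\chi_j\to\chi} Q_{\chi_j}(\infty) \le Q_\chi(\infty)$, which is the inequality in the useless direction: it is perfectly consistent with $Q_{\chi_j}(\infty) > 0$ on a dense set while $Q_\chi(\infty) = 0$ at a limit. What you would need is \emph{lower} semicontinuity, equivalently continuity (upper semicontinuity is automatic for PW sets --- that is Lemma \ref{L3.3}). By Theorem \ref{T1.2}, continuity of $\chi \mapsto Q_\chi(\infty)$ on a PW set is \emph{exactly} the DCT condition, which is strictly stronger than PW; Hasumi \cite{Hasu} exhibits explicit PW sets failing DCT. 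So this route would at best prove Theorem \ref{T1.1} under an additional DCT hypothesis, not for general PW sets.

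There are secondary slips as well: $\sum_j m_j G_\fre(z_j)$ need not converge for arbitrary $m_j\in\bbZ_{\ge0}$ when there are infinitely many critical points, so the products $\prod_j B_j^{m_j}$ do not all define nonzero $H^\infty$ functions. More to the point, the sub-semigroup of $\pi_1(\Omega)^*$ generated by $\chi_\fre$ and the $\chi_j$'s is typically only dense, and the ``outer function absorbing the residual phase'' step --- which is where the real work of Widom's theorem lives --- is left entirely unspecified. The converse is likewise sketchy: it is not clear a priori that a ``test character'' forcing any realizer to vanish at every critical point exists, and producing one seems to presuppose much of the structure theory (e.g., the inner-function description of extremal elements) that Widom's theorem is meant to set up. The correct proof is genuinely nontrivial; see \cite{Widom71} or \cite[Theorem 5.2B]{Hasu}.
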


Single-valued analytic functions on $\wti{\Omega}$ correspond to multi-valued functions on $\Omega$ and we will often refer to them as if they are ordinary functions.  In essence we view $\Omega$ with the convex hull of $\fre$ removed as a subset of $\wti\Omega$.

For a PW set, $\fre$, and any character, $\chi$, we let $H^\infty(\Omega,\chi)$ be the set of bounded analytic $\chi$-automorphic functions on $\wti\Omega$ and denote by $\|\cdot\|_\infty$ the corresponding norm.  We use $H^2(\Omega,\chi)$ or $\calH_\chi$ for the set of analytic $\chi$-automorphic functions, $f$, for which $|f|^2$ has a harmonic majorant in $\Omega$. Evidently, $H^\infty(\Omega,\chi)\subset H^2(\Omega,\chi)$. It is easy to see that $H^2(\Omega,\chi)$ is precisely those $\chi$-automorphic functions, $f$, on $\Omega$ whose lifts to $\bbD$ under $\x$ are in $H^2(\bbD)$.

When $\fre$ is a PW set, there exist $h \in H^\infty(\Omega,\chi)$ with $h(\infty) \ne 0$, for if $f \in H^\infty(\Omega,\chi)$ with $f(z) = Cz^{-n}+\textrm{O}(z^{-n-1}); \, C \ne 0$, then $h(z) = z^n f(z)$ is also in $H^\infty(\Omega,\chi)$ and $h(\infty) = C$.

For any $\chi$, the Widom trial functions for $\chi$ is the set, $\{h \in H^\infty(\Omega,\chi) \, | \, h(\infty)=1\}$.  The \emph{Widom minimizer}, $F_\chi(z)$, is a bounded $\chi$-character automorphic function with $F_\chi(\infty)=1$ so that
\begin{equation}\label{1.15A}
  \norm{F_\chi}_\infty = \inf\{\norm{h}_\infty \, | \, h \in H^\infty(\Omega,\chi); \, h(\infty) = 1\}
\end{equation}
Knowing that there are Widom trial functions, it is easy to prove using Montel's Theorem (\cite[Section 6.2]{BCA}) that minimizers exist.  In Section \ref{s2}, we'll prove that minimizers are unique (this is not a new result although our proof is simpler than previous ones).

We will also consider a dual problem.  The dual Widom trial functions are $\{g \in H^\infty(\Omega,\chi) \, | \, \norm{g}_\infty = 1\}$.  The \emph{dual Widom maximizer} is that function $Q_\chi$ in the dual Widom trial functions with
\begin{equation}\label{1.15B}
  Q_\chi(\infty) = \sup\{g(\infty )\, | \, g \in H^\infty(\Omega,\chi), \, \norm{g}_\infty = 1, \, g(\infty) > 0\}
\end{equation}
If $g$ is a dual Widom trial function with $g(\infty) \ne 0$, then $g/g(\infty)$ is a Widom trial function.  Conversely, if $h$ is a Widom trial function, then $h/\norm{h}_\infty$ is a dual Widom trial function.  This shows that for the two problems, either both or neither have unique solutions and
\begin{equation}\label{1.16}
  Q_\chi = F_\chi/\norm{F_\chi}_\infty, \quad F_\chi=Q_\chi/Q_\chi(\infty), \quad Q_\chi(\infty) = 1/\norm{F_\chi}_\infty
\end{equation}

Suppose now that $\fre \subset \bbC$ is compact, connected and simply connected.  Then $\Omega$ is simply connected and $B_\fre$ is analytic (rather than multivalued analytic) and is, in fact, the Riemann map of $\Omega$ to $\bbD$ (uniquely specified by $B_\fre(\infty) = 0$ and that near $\infty$, $B_\fre(z) = Cz^{-1} + \textrm{O}(z^{-2})$ with $C>0$).  In 1919, assuming that $\partial\Omega$ is an analytic Jordan curve, Faber \cite{Faber} proved that in this case
\begin{equation}\label{1.17}
  \frac{T_n(z) B_\fre(z)^n}{C(\fre)^n} \to 1
\end{equation}
uniformly on $\overline{\Omega}$.

In 1969, Widom \cite{Widom} considered $\fre \subset \bbC$ which is a finite union of $C^{1+}$ Jordan curves and arcs.  He noted that \eqref{1.17} couldn't
hold when there was more than one arc or curve since, in that case, $B_\fre(z) ^n$ is now a character automorphic function with character
$\chi_\fre^n$. If $F_n \equiv F_{\chi_\fre^n}$, Widom suggested what we call the \emph{Widom surmise}, that
\begin{equation}\label{1.18}
  \frac{T_n(z) B_\fre(z)^n}{C(\fre)^{n}} -F_n(z) \to 0
\end{equation}
uniformly on compact subsets of $\wti{\Omega}$. He proved this when $\fre$ consisted only of (closed) Jordan curves and in \cite{CSZ1}, we proved it for $\fre$ a finite gap set in $\bbR$.

We say that $T_n$ has \emph{strong Szeg\H{o}--Widom asymptotics} if (see \cite[Section 6.6]{OT} for a discussion of almost periodic functions)
\vspace{-0.08cm}
\begin{enumerate}[label=(\alph*)]
  \item \eqref{1.18} holds uniformly on compact subsets of $\wti{\Omega}$
  \item $n \mapsto \norm{F_n}_\infty$ is an almost periodic function
  \item $n \mapsto F_n(z)$ is an almost periodic function uniformly on compact subsets of $\wti{\Omega}$.
\end{enumerate}
\vspace{-0.08cm}
We note that the above results of Widom \cite{Widom} and \cite{CSZ1} prove (b) and (c) also.

A final element we need before stating our main theorem is the notion of the Direct Cauchy Theorem (DCT) property.  There are many equivalent definitions of DCT -- see Hasumi \cite{Hasu} or Volberg--Yuditskii \cite{VY}.  Rather than stating a formal definition, we first of all quote a theorem that could be used as one definition of DCT:

\begin{theorem} [Hayashi \cite{Hay}, Hasumi \cite{Hasu}] \lb{T1.2}
A PW set $\fre$ obeys a DCT if and only if the function $\chi \mapsto Q_\chi(\infty)$ of the dual Widom maximizer problem is a continuous function on $\pi_1(\Omega)^*$.
\end{theorem}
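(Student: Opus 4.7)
The plan is to exploit a normal-families argument for one inequality and to relate the scalar continuity of $Q_\chi(\infty)$ to an $H^\infty$ factorization near the trivial character for the other.

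First, I would show that $\chi \mapsto Q_\chi(\infty)$ is automatically upper semicontinuous on $\pi_1(\Omega)^*$, using only the PW hypothesis. Given $\chi_n \to \chi$ with dual Widom maximizers $Q_{\chi_n}$, lift to $\bbD$ via $\x$ and apply Montel: a subsequence converges locally uniformly to a bounded analytic $g$ on $\wti\Omega$ that is $\chi$-automorphic (because $\chi_n \to \chi$ in $\pi_1(\Omega)^*$), satisfies $\norm{g}_\infty \le 1$ and $g(\infty) = \lim_n Q_{\chi_n}(\infty) \ge 0$. Hence $Q_\chi(\infty) \ge \limsup_n Q_{\chi_n}(\infty)$.

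The real content of Theorem \ref{T1.2} is therefore the equivalence of DCT with lower semicontinuity at every $\chi$. For the direction DCT $\Rightarrow$ lower semicontinuity, I would use the formulation of DCT (following Hasumi and Hayashi) that asserts, for each character $\eta$ close to the trivial character $\bdone$, the existence of $u_\eta \in H^\infty(\Omega,\eta)$ with $\norm{u_\eta}_\infty = 1$ and $u_\eta(\infty) \to 1$ as $\eta \to \bdone$. Given $\chi_n \to \chi$, set $\eta_n = \chi_n \chi^{-1} \to \bdone$; then $Q_\chi \cdot u_{\eta_n}$ lies in $H^\infty(\Omega,\chi_n)$ with unit sup-norm, so it is a dual Widom trial function for $\chi_n$ and yields
\[
Q_{\chi_n}(\infty) \ge Q_\chi(\infty)\, u_{\eta_n}(\infty) \longrightarrow Q_\chi(\infty).
\]

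For the converse I would proceed contrapositively, or directly by taking $\eta \to \bdone$ in the continuity assumption: one obtains $Q_\eta(\infty) \to Q_\bdone(\infty) = 1$, and combining with Montel shows $Q_\eta \to 1$ locally uniformly on $\wti\Omega$. The family $\{Q_\eta\}$ then furnishes precisely the inner-type building blocks required by the Hayashi equivalent of DCT. The main obstacle I anticipate is this converse: upgrading convergence of the scalar $Q_\eta(\infty)$ to the full function-theoretic condition defining DCT, which will require careful use of the uniqueness of the Widom minimizer to be established in Section \ref{s2}, together with an inner-outer factorization argument on $\wti\Omega$ to translate $H^\infty$ extremal data back into the boundary-integral form in which DCT is usually stated.
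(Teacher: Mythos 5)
The paper does not prove Theorem~\ref{T1.2}; it is quoted from Hayashi and Hasumi and in fact the authors remark that the statement ``could be used as one definition of DCT.'' So there is no internal proof to compare against, and the task here is really to assess whether your sketch would constitute a self-contained proof. The scaffolding you build is sound and, tellingly, reproduces arguments that do appear later in the paper for a different purpose: your upper-semicontinuity step via Montel is exactly Lemma~\ref{L3.3}, and your reduction of continuity everywhere to continuity at the trivial character via the multiplicative trial function $Q_\chi \cdot u_{\eta_n}$ is exactly the mechanism of Lemma~\ref{L3.4}. Those pieces are correct.

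The gap is in the substantive content. For the direction DCT~$\Rightarrow$~continuity, you invoke ``the formulation of DCT (following Hasumi and Hayashi) that asserts, for each $\eta$ near $\bdone$, the existence of $u_\eta \in H^\infty(\Omega,\eta)$ with $\norm{u_\eta}_\infty=1$ and $u_\eta(\infty)\to 1$.'' But this assertion \emph{is} continuity of $\chi\mapsto Q_\chi(\infty)$ at $\bdone$ (since $Q_\eta(\infty)\ge u_\eta(\infty)$ and $Q_\eta(\infty)\le 1$); you have simply renamed the thing to be proved as your hypothesis. The actual Hasumi/Hayashi definition of DCT is a Cauchy-integral identity on the boundary for functions of bounded characteristic in the appropriate Smirnov class, and the nontrivial content of Theorem~\ref{T1.2} is precisely the passage between that boundary-integral condition and the $H^\infty$-extremal condition. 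Your sketch does not attempt that passage in either direction: the forward direction is circular as stated, and for the converse you candidly flag the ``inner-outer factorization argument'' as an anticipated obstacle rather than supplying it. In short, the proposal correctly reduces the theorem to its hard core (the equivalence of the two formulations of DCT near $\bdone$) but does not touch that core; filling it in would require the Smirnov-class machinery of Hasumi's Chapter on DCT or Hayashi's paper, which is beyond normal-families arguments.
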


We'll also quote as needed some other results that rely on the DCT condition.  We note that any homogeneous subset of $\bbR$ (in the sense of Carleson \cite{SY}) obeys DCT \cite{SY}.  On the other hand, Hasumi \cite{Hasu} has found rather simple explicit examples (with thin components) of subsets of $\bbR$ which obey PW but not DCT.  Volberg--Yuditskii \cite{VY} have even found examples all of whose reflectionless measures are absolutely continuous.

We can now state the main result of this paper:

\begin{theorem} \lb{T1.3} Let $\fre \subset \bbR$ be a compact set which is regular for potential theory and that obeys the PW and DCT conditions.  Then its Chebyshev polynomials have strong Szeg\H{o}--Widom asymptotics.  Moreover,
\begin{equation}\label{1.18A}
  \lim_{n \to \infty} \frac{t_n}{C(\fre)^n \norm{F_n}_\infty} = 2
\end{equation}
\end{theorem}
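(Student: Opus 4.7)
Set $r_n := t_n/C(\fre)^n$, so $r_n \in [2, 2e^{PW(\fre)}]$ by (1.7) and (1.10). The plan is to show $r_n/\|F_n\|_\infty \to 2$ via two matching inequalities, and then leverage the DCT-based continuity of $\chi \mapsto F_\chi$ on the compact character group $\pi_1(\Omega)^*$ to deduce the strong Szeg\H{o}--Widom asymptotics (a)--(c) from subsequential convergence. The candidate function is
\[
R_n(z) := T_n(z)\,B_\fre(z)^n/C(\fre)^n,
\]
which is character-automorphic on $\wti\Omega$ with character $\chi_\fre^n$ and value $R_n(\infty) = 1$. Since $\log|R_n| = \log|T_n| - nG_\fre - n\log C(\fre)$ is subharmonic on $\bbC\setminus\fre$, vanishes at infinity, and is bounded by $\log r_n$ on $\fre$, the maximum principle gives $\|R_n\|_\infty \leq r_n$, so $R_n$ is a Widom trial function and $\|F_n\|_\infty \leq r_n$.

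To sharpen this to $\|F_n\|_\infty \leq r_n/2 + o(1)$, I will use the Chebyshev-type identity $T_n = (t_n/2)(\psi_n + \psi_n^{-1})$, where $\psi_n = T_n/t_n + \sqrt{(T_n/t_n)^2 - 1}$ is single-valued on $\bbC \setminus \fre_n$ with $|\psi_n| = e^{nG_n}$ ($G_n$ the Green's function of $\fre_n$, cf.~(1.5a)). Since $\fre \subset \fre_n$ forces $G_n \leq G_\fre$ everywhere, for $z \in \Omega \setminus \fre_n$ one has
\[
|R_n(z)| \leq \tfrac{r_n}{2}\bigl(e^{n(G_n(z) - G_\fre(z))} + e^{-n(G_n(z) + G_\fre(z))}\bigr) \leq \tfrac{r_n}{2} + e^{PW(\fre)}\,e^{-2n G_\fre(z)},
\]
and for $z \in \fre_n \setminus \fre$, $|R_n(z)| \leq r_n\,e^{-n G_\fre(z)}$. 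Passing to a subsequence $n_k$ with $\chi_\fre^{n_k} \to \chi$ and $r_{n_k} \to r$ (via compactness of $\pi_1(\Omega)^*$), Montel's theorem gives $R_{n_k} \to R$ uniformly on compacts of $\wti\Omega$, with $R \in H^\infty(\Omega,\chi)$ and $R(\infty) = 1$. For any $\delta > 0$ and $z \in \Omega$ with $G_\fre(z) \geq \delta$, the displayed bound yields $|R(z)| \leq r/2$ in the limit; letting $\delta \to 0$ gives $|R| \leq r/2$ throughout $\Omega$, so $\|R\|_\infty \leq r/2$ and $\|F_\chi\|_\infty \leq r/2$.

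For the matching inequality $t_n \leq 2C(\fre)^n \|F_n\|_\infty (1 + o(1))$, I plan to reconstruct near-extremal monic polynomials from $F_n$. For $h \in H^\infty(\Omega, \chi_\fre^n)$ with $h(\infty) = 1$, the reflected partner $h^\dagger(z) := \overline{h(\bar z)}$ also has character $\chi_\fre^n$ (using $\chi_\fre(\bar\gamma) = \overline{\chi_\fre(\gamma)}$ for $\fre \subset \bbR$), so the quotient $C(\fre)^n (h + h^\dagger)/(2 B_\fre^n)$ is single-valued and meromorphic on $\Omega \cup \{\infty\}$ with monic leading term $z^n$. It does not extend analytically across $\fre$ in general, but one approximates it by genuine monic polynomials of degree $n$ whose Chebyshev norm is within $o(1)$ of $2 C(\fre)^n \|h\|_\infty$. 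The DCT hypothesis, via continuity of $\chi \mapsto F_\chi$ and $\chi \mapsto Q_\chi(\infty)$ (Theorem \ref{T1.2}), provides the regularity needed for this approximation in the general PW+DCT setting. Taking $h = F_n$ and invoking the extremality of $T_n$ gives $t_n \leq 2 C(\fre)^n \|F_n\|_\infty (1 + o(1))$, hence $r/\|F_\chi\|_\infty = 2$. Combined with uniqueness of Widom minimizers (Section \ref{s2}), $R = F_\chi$. Since every subsequential limit gives the same value $2$, the full sequence converges, proving (1.18A).

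Statement (a) of strong Szeg\H{o}--Widom follows from $R_{n_k} - F_{\chi_\fre^{n_k}} \to R - F_\chi = 0$ along every convergent subsequence, combined with continuity of $\chi \mapsto F_\chi$ to upgrade to full-sequence convergence on compacts of $\wti\Omega$. Parts (b) and (c) follow because $\{\chi_\fre^n\}$ is an orbit in the compact abelian group $\pi_1(\Omega)^*$, hence almost periodic, and the DCT-continuous map $\chi \mapsto F_\chi$ preserves almost periodicity. I expect the main obstacle to be the polynomial-reconstruction step: turning a character-automorphic bounded analytic function back into a degree-$n$ monic polynomial with controlled Chebyshev norm is not direct, since $F_n/B_\fre^n$ is meromorphic on $\Omega \cup \{\infty\}$ with a single pole of order $n$ at $\infty$ but does not extend analytically across $\fre$. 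Overcoming this for arbitrary PW+DCT sets (as opposed to the finite-gap case of \cite{CSZ1}) is the technical crux, and DCT enters precisely here via the continuous dependence of the Widom problem on the character parameter.
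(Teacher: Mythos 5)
Your upper bound $\norm{F_n}_\infty \le r_n/2 + o(1)$, obtained from the Chebyshev identity and the maximum principle, is sound and does capture the origin of the factor $2$ in \eqref{1.18A}. But the matching inequality $t_n \le 2C(\fre)^n\norm{F_n}_\infty(1+o(1))$ is a genuine gap, which you yourself flag, and the route you sketch will not close it: $C(\fre)^n(F_n+F_n^\dagger)/(2B_\fre^n)$ is indeed single-valued with a monic pole at $\infty$, but it does not extend analytically across $\fre$, and DCT-continuity of $\chi\mapsto Q_\chi(\infty)$ (Theorem \ref{T1.2}) provides no quantitative boundary control that would let you approximate it by a degree-$n$ monic polynomial with only an $o(1)$ loss in $\norm{\cdot}_\fre$. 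This approximation step is already the nontrivial part of the finite-gap argument in \cite{CSZ1}, where it is done by explicit Riemann-surface constructions, and your proposal offers no mechanism to carry it out for an infinite-gap PW+DCT set.

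The paper sidesteps polynomial reconstruction entirely. Rather than comparing $t_n$ to $\norm{F_n}_\infty$ via competitor polynomials, it works with $M_n=(B_\fre/B_n)^n$ (tied to your $R_n=L_n$ by the identity $2T_n/t_n=B_n^n+B_n^{-n}$), uses potential theory in Section \ref{s4} to show that along subsequences selected by tracking zeros of $T_n$ in the gaps one has $M_{n_j}\to B_S$ for an explicit Blaschke product $B_S$ over the limiting gap zeros, and then proves in Section \ref{s5} via the Sodin--Yuditskii reproducing-kernel theory and Abel map that $B_S$ is a dual Widom maximizer. Lemmas \ref{L5.2} and \ref{L5.3}, namely the inequality $|f(\infty)|^2\le K^{\gamma\beta}(\infty)/K^\gamma(\infty)$ together with its saturation by $B_S$ at a specific character $\alpha_0$, are what replace the step you cannot carry out; that is where DCT actually enters the argument for \eqref{1.18A}, and not through the polynomial-approximation mechanism you conjecture. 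Your reduction of parts (a)--(c) and of the passage from subsequential to full-sequence limits to the continuity of $\chi\mapsto F_\chi$ on the compact group $\pi_1(\Omega)^*$ does match the paper's Section \ref{s3} and is correct.
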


\begin{remarks} 1. Given the limit \eqref{1.18}, the $2$ in \eqref{1.18A} may seem surprising.  Widom noted the $2$ in the easy special case $\fre = [-1,1]$ and proved \eqref{1.18A} for general finite gap subsets of $\bbR$.  This fact was used in our proof of \eqref{1.18} for the finite gap case in \cite{CSZ1}.  Here we'll prove \eqref{1.18} first and then prove \eqref{1.18A}.

2.  Our proof uses a partially variant strategy to the one in \cite{CSZ1} and we believe is simpler even in the finite gap case (especially if you include the need there for some results of Widom that we don't need to prove a priori).
\end{remarks}

For our other main results, we need a new definition.  We say a set $\fre \subset \bbR$ has a \emph{canonical generator} if $\{\chi_\fre^n\}_{n=-\infty}^\infty$ is dense in the character group $\pi_1(\Omega)^*$.  This holds if and only if for each  decomposition $\fre = \fre_1 \cup \dots \cup \fre_\ell$ into closed disjoint sets and rational numbers $\{q_j\}_{j=1}^{\ell-1}$, we have that
\begin{equation}
  \label{independent}
  \sum_{j=1}^{\ell-1} q_j\rho_\fre(\fre_j) \ne 0
\end{equation}

\begin{remarks} 1. The class of regular PW sets can be parametrized by comb domains of the form
\begin{equation}
  \label{Pi}
  \Pi=\{ x+iy \, | \, 0<x<1, y>0 \} \setminus \cup_k \{ \omega_k+iy \, | \, 0<y\leq h_k\}
\end{equation}
with $\omega_k\in(0,1)$, $\omega_k\neq\omega_j$ for $k\neq j$ and $h_k>0$, $\sum_k h_k<\infty$. Specifically, if $\fre$ is scaled to the interval $[0,1]$, then
\begin{equation}
  \label{theta}
  \theta(z)=\frac{-\log B_\fre(z)}{\pi i}
\end{equation}
is a conformal mapping of $\bbC_+$ onto such a domain (see \cite{EY} for more details).
In that parametrization, the property of a canonical generator is generic.
For one can show that $\omega_k=\rho_\fre(\{x\in\fre\,|\,x\leq a_k\})$ and the collection of comb domains with rationally independent $\omega_k$'s clearly form a dense $G_\delta$ set.

2.  It seems likely that the condition of a canonical generator holds in various other generic senses as well. For example, given a fixed nowhere dense, infinite gap set, we can pick a positive integer labeling of the gaps and, for any $\lambda\in\prod_1^\infty [1/2, 2]$, consider the set obtained by scaling the $j$th gap by $\lambda_j$. We suspect the set of $\lambda$'s for which this set has a canonical generator, is a dense $G_\delta$. In the finite gap case, that this is true follows from results of Totik \cite{Totik09}.
\end{remarks}

\begin{theorem} \lb{T1.4}  Let $\fre \subset \bbC$ be a compact set regular for potential theory with a canonical generator.  If $\fre$ has a Totik--Widom bound, then $\fre$ is a PW set.
\end{theorem}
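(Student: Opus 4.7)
The plan is to combine the Totik--Widom bound with Widom's characterization of PW sets (Theorem \ref{T1.1}), using the powers $\chi_\fre^n$ of the canonical character to reach every element of $\pi_1(\Omega)^*$.

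The key object is
\[
  h_n(z)=\frac{T_n(z)\,B_\fre(z)^n}{C(\fre)^n}.
\]
Since $T_n$ is single-valued while $B_\fre^n$ is $\chi_\fre^n$-automorphic, $h_n$ is a $\chi_\fre^n$-character automorphic (multivalued) analytic function on $\Omega$. From $B_\fre(z)=C(\fre)z^{-1}+O(z^{-2})$ near infinity, one reads off $h_n(\infty)=1$. To bound $\|h_n\|_\infty$, consider the subharmonic function
\[
  u_n(z)=\log|T_n(z)|-nG_\fre(z)-n\log C(\fre),
\]
which satisfies $u_n(\infty)=0$ and, by regularity, $u_n(z)\leq \log t_n-n\log C(\fre)\leq \log D$ quasi-everywhere on $\fre$ by the assumed Totik--Widom bound. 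The maximum principle for subharmonic functions then gives $u_n\leq \log D$ on $\Omega$, so after lifting to $\wti\Omega\cong\bbD$ via $\x$, each $h_n$ is a bounded analytic function with $\|h_n\|_\infty\leq D$ and $h_n(\infty)=1$.

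Now fix an arbitrary $\chi\in\pi_1(\Omega)^*$. Since $\fre$ has a canonical generator, there is a sequence $n_k$ with $\chi_\fre^{n_k}\to\chi$ pointwise on $\pi_1(\Omega)$. The sequence of lifts $h_{n_k}\circ\x$ is uniformly bounded in $H^\infty(\bbD)$, so by Montel's theorem we pass to a subsequence converging locally uniformly on $\bbD$ to an analytic function $\widehat{h}_\infty$. Let $h_\infty$ be the induced multivalued function on $\Omega$. For any deck transformation $\gamma$ of the cover $\x$,
\[
  \widehat{h}_\infty(\gamma z)=\lim_k\chi_\fre^{n_k}([\gamma])\,h_{n_k}(\x(z))=\chi([\gamma])\,\widehat{h}_\infty(z),
\]
so $h_\infty\in H^\infty(\Omega,\chi)$. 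Evaluating at $\x(0)=\infty$ gives $h_\infty(\infty)=1$, so $h_\infty\not\equiv 0$. Widom's theorem (Theorem \ref{T1.1}) then yields that $\fre$ is a PW set.

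The main technical point is the verification that the limit $h_\infty$ carries the intended character $\chi$, which rests on the topology of $\pi_1(\Omega)^*$ being that of pointwise convergence (so density of $\{\chi_\fre^n\}$ really means pointwise approximability of each character), together with locally uniform convergence of $h_{n_k}$. Everything else — the character automorphy of $h_n$, the uniform bound from Totik--Widom via the maximum principle, and normal families — is essentially formal.
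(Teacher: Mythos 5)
Your proposal is correct and follows essentially the same route as the paper: the paper also forms $T_{n_j}B_\fre^{n_j}/C(\fre)^{n_j}$, gets the uniform bound $\leq D$ from the Totik--Widom inequality via the maximum principle, normalizes so the value at $\infty$ is $1$, and uses the canonical generator plus Montel to extract a nonzero element of $H^\infty(\Omega,\chi)$, then invokes Theorem~\ref{T1.1}. Your added details (the subharmonic function $u_n$ and the explicit deck-transformation check of the limiting character) are just a more spelled-out version of the same argument.
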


\begin{remarks} 1.  While we need to assume canonical generator, this result suggests that Totik--Widom fails if the set is not PW.

2.  We emphasize that this result holds for $\fre \subset  \bbC$ and not just $\fre \subset \bbR$.
\end{remarks}

\begin{theorem} \lb{T1.5} Let $\fre \subset \bbC$ be a compact set regular for potential theory with a canonical generator.  Suppose that $\fre$ is a PW set and that $n \mapsto \norm{F_n}_\infty$ is a bounded almost periodic function on $\bbZ$.  Then $\fre$ is a DCT set.
\end{theorem}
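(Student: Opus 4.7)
The plan is to prove that $a(\chi) := \norm{F_\chi}_\infty$ is continuous on $\pi_1(\Omega)^*$; by Theorem~\ref{T1.2} this is equivalent to the DCT condition. I first record three general facts about $a$ available on any PW set: (i) $a$ is lower semicontinuous on $\pi_1(\Omega)^*$, by a standard Montel argument, namely if $\chi_n\to\chi$ with $\liminf a(\chi_n)=c<\infty$ then a subsequence of $\{F_{\chi_n}\}$ converges uniformly on compact subsets of $\wti{\Omega}$ to some $G\in H^\infty(\Omega,\chi)$ with $G(\infty)=1$ and $\norm{G}_\infty \le c$, so $a(\chi)\le c$; (ii) $a$ is submultiplicative, since $F_{\chi_1}F_{\chi_2}$ is a Widom trial function for $\chi_1\chi_2$; and (iii) $a\ge 1$ with $a(1)=1$ attained by $F_1\equiv 1$.

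The next step is to pass to the Bohr compactification. Since $n\mapsto\norm{F_n}_\infty$ is bounded and almost periodic on $\bbZ$, it extends uniquely to a continuous function $A$ on $b\bbZ$; and since $\{\chi_\fre^n\}$ is dense in $\pi_1(\Omega)^*$ (canonical generator), the homomorphism $n\mapsto\chi_\fre^n$ extends to a continuous surjective homomorphism $\Psi: b\bbZ\to\pi_1(\Omega)^*$. The central step I would then prove is that $A$ descends to a continuous function $A_0$ on $\pi_1(\Omega)^*$, i.e.\ that $A$ is invariant under translation by $\ker\Psi$. Given $\xi\in\ker\Psi$ and $n\in\bbZ$, choose a net $(k_\alpha)\subset\bbZ$ with $k_\alpha\to\xi$ in $b\bbZ$; then $\chi_\fre^{n+k_\alpha}\to\chi_\fre^n$ in $\pi_1(\Omega)^*$ by continuity of $\Psi$, and LSC of $a$ combined with continuity of $A$ gives
\begin{equation*}
\norm{F_n}_\infty = a(\chi_\fre^n) \le \liminf_\alpha a(\chi_\fre^{n+k_\alpha}) = \lim_\alpha A(n+k_\alpha) = A(n+\xi).
\end{equation*}
So $A(n)\le A(n+\xi)$ on $\bbZ$; density of $\bbZ$ in $b\bbZ$ together with continuity of $A$ and of addition extends this to $A(x)\le A(x+\xi)$ on $b\bbZ$, and applying the same inequality with $-\xi$ and $x+\xi$ in place of $\xi$ and $x$ yields the reverse inequality. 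Hence $A(x+\xi)=A(x)$ and $A$ descends to a continuous $A_0$ on $\pi_1(\Omega)^*$ satisfying $A_0(\chi_\fre^n)=\norm{F_n}_\infty$.

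To finish, $A_0$ is continuous on $\pi_1(\Omega)^*$ and agrees with the LSC $a$ on the dense orbit, so LSC forces $a\le A_0$ pointwise; in particular $a(1)=A_0(1)=1$. For any $\eta_k\to 1$ in $\pi_1(\Omega)^*$, LSC of $a$ gives $\liminf a(\eta_k)\ge 1$ while $a\le A_0$ combined with continuity of $A_0$ at $1$ gives $\limsup a(\eta_k)\le 1$, so $a(\eta_k)\to 1$ and $a$ is continuous at $1$. A submultiplicativity bootstrap propagates continuity to every $\chi$: the inequalities $a(\chi\eta)\le a(\chi)a(\eta)$ and $a(\chi)\le a(\chi\eta)a(\eta^{-1})$ combined with $a(\eta),a(\eta^{-1})\to 1$ as $\eta\to 1$ yield $a(\chi\eta)\to a(\chi)$.

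The main obstacle is the descent step: the almost periodic data naturally lives on $b\bbZ$, whereas $a$ is a function on the quotient $\pi_1(\Omega)^*$, and these two compact groups are typically very different. It is exactly the LSC of $a$ (a general PW fact coming from Montel) that forces the Bohr extension $A$ to be constant on $\ker\Psi$-cosets, allowing descent; the rest of the proof is either standard (Bohr/Montel) or a formal bootstrap from submultiplicativity.
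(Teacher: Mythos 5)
Your proof is correct, and it rests on exactly the three ingredients the paper uses (Montel gives lower semicontinuity of $\chi\mapsto\norm{F_\chi}_\infty$, equivalently upper semicontinuity of $\chi\mapsto Q_\chi(\infty)$; submultiplicativity; and the continuous extension of $n\mapsto\norm{F_n}_\infty$ to a compactification of $\bbZ$) — but it packages them differently. The paper's Lemmas~\ref{L3.5}--\ref{L3.6} argue sequence-by-sequence: they fix $n_s$, use LSC on the shifted family $Q_{\chi_\fre^{n_s-n_j}}$ to get $Q_{\chi_\fre^{n_s}}(\infty)\ge A(n_s-\alpha)$, and then let $n_s\to\alpha$ to force the limit to be $1$; the canonical generator hypothesis enters afterward, in Lemma~\ref{L3.6}, to pass from $\bdone$ being a limit of powers of $\chi_\fre$ to an arbitrary approach $\chi_j\to\bdone$. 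You instead introduce the continuous surjection $\Psi:b\bbZ\to\pi_1(\Omega)^*$ (where canonical generator gives surjectivity up front) and prove a genuine descent lemma: the Bohr extension $A$ is $\ker\Psi$-invariant, so it factors through a continuous $A_0$ on $\pi_1(\Omega)^*$ with $a\le A_0$ everywhere and equality on the dense orbit. From there continuity at $\bdone$ is immediate, and your submultiplicativity bootstrap is exactly the content of Lemma~\ref{L3.4}. What the descent buys you is a clean global object ($A_0$, a continuous majorant of $a$) and a more structural explanation of where the canonical generator condition is used; what the paper's sequential route buys is that it never needs to name $b\bbZ$ or a quotient map, only the compact hull of the a.p.\ sequence. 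One small thing worth flagging in a write-up: the LSC of $a$ established by Montel is a sequential statement, while the descent step invokes it along nets $k_\alpha\to\xi$ in $b\bbZ$; this is fine because $\pi_1(\Omega)^*$ is compact and separable, hence metrizable, so sequential LSC coincides with topological LSC, but that justification should be stated.
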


\begin{remarks}  1.  Again, we emphasize that this holds for all $\fre \subset \bbC$ not just $\fre \subset \bbR$.

2.  So, one small part of Szeg\H{o}--Widom asymptotics, namely asymptotic almost periodicity of $\norm{T_n}_\fre/C(\fre)^n$ and the limit result \eqref{1.18A}, implies that $\fre$ is a DCT set (at least if $\fre$ has a canonical generator).
\end{remarks}

We will note results from \cite{CSZ1} as needed but mention some that are needed to overview the contents of the paper.  Let $B_n \equiv B_{\fre_n}$.  Then \cite{CSZ1} proved that
\begin{equation}\label{1.19}
   \frac{2T_n(z)}{t_n} = B_n(z)^n + B_n(z)^{-n}
\end{equation}
Thus, instead of looking at
\begin{equation}\label{1.20}
  L_n(z) \equiv \frac{T_n(z) B_\fre(z)^n}{C(\fre)^n}
\end{equation}
we'll look at
\begin{equation}\label{1.21}
  M_n(z) = B_\fre(z)^n/B_n(z)^n
\end{equation}
which obeys
\begin{equation}\label{1.22}
  |M_n(z)| = \exp(-nh_n(z)), \qquad h_n(z) \equiv G_\fre(z) - G_{\fre_n}(z)
\end{equation}
By \eqref{1.19}
\begin{equation}\label{1.23}
  L_n(z) = (1+B_n(z)^{2n})H_n(z), \quad H_n(z) = \frac{C(\fre_n)^n}{C(\fre)^n}\frac{B_\fre(z)^n}{B_n(z)^n}=\frac{M_n(z)}{M_n(\infty)}
\end{equation}
The first equation in \eqref{1.23} explains the $2$ in \eqref{1.18A}.  By a simple argument,
\begin{equation}\label{1.23A}
  \sup_{n, z \in K} |B_n(z)| < 1 \textrm{ for any compact set } K \subset \wti{\Omega}
\end{equation}
so that $B_n(z)^{2n}$ goes to zero, but for $\sup_{z \in \Omega} |1+B_n(z)^{2n}|$, we get $2$ since there are points $x \in \fre_n$ with $B_n(x+i0) = 1$.

By the first equation in \eqref{1.23} and \eqref{1.23A}, \eqref{1.18} is equivalent to
\begin{equation}\label{1.24}
  H_n(z) - F_n(z) \to 0
\end{equation}
By the second equation in \eqref{1.23}, it seems likely that it suffices to control limits of $M_n$ and that is what we'll do.  By the maximum principle for harmonic functions and \eqref{1.22}, $|M_n(z)| \le 1$.  We will prove that $\lim_{n \to \infty} \norm{M_n}_\infty = 1$ and that limit points of $M_n$ with $n_j \to \infty$ so that $\chi_\fre^{n_j} \to \chi_0$ for some $\chi_0 \in \pi_1(\Omega)^*$ are dual Widom maximizers which will let us prove \eqref{1.24}.

Here is an overview of the rest of this paper.  In section \ref{s2}, following ideas of Fisher \cite{Fish}, we prove uniqueness of solutions of the Widom minimization problem (this is not a new result -- only a new proof -- see the discussion there) and prove Theorem \ref{T1.4}.  In Section \ref{s3}, we discuss continuity of $F_\chi$ in $\chi$ and prove Theorem \ref{T1.5}.  In Section \ref{s4}, we prove that limit points of the $M_n$ are Blaschke products of suitable $B(z,x_j)$ and in Section \ref{s5} that these products are dual Widom maximizers.  This result has been obtained by Volberg--Yuditskii \cite{VY} but we found an alternate proof using ideas of Eichinger--Yuditskii \cite{EichY}.  Finally, in Section \ref{s6}, we put things together and prove Theorem \ref{T1.3}

\section{Uniqueness of the Dual Widom Maximizer } \lb{s2}

In this section, we provide a proof of uniqueness of solutions of the dual Widom maximizer problem and so uniqueness of solutions of the Widom minimizer problem.  If $\fre$ obeys a PW condition, $H^\infty(\Omega,\chi)$ is non-empty (by Theorem \ref{T1.1}) and so contains $h$ with $h(\infty) > 0$.  By Montel's theorem (\cite[Section 6.2]{BCA}), $\{h \in H^\infty(\Omega,\chi) \,|\, \norm{h}_\infty \le 1, h(\infty) \ge 0\}$ is compact in the topology of uniform convergence on compact subsets of $\wti{\Omega}$.  Thus, there exists a maximizer.  We need to prove that this is unique.

Recall that the Ahlfors problem for a compact set $\fre \subset \bbC$ is to look for bounded analytic functions, $f$, on $\Omega = (\bbC \cup \{\infty\}) \setminus \fre$ with $\sup_{z \in \Omega} |f(z)| \le 1$ and $f(\infty) =0$ that maximize $f'(\infty)$ (defined by $f(z) = f(\infty)+f'(\infty)z^{-1}+\textrm{O}(z^{-2})$ near $z=\infty$).  This maximum is called the analytic capacity (because if ``analytic'' is replaced by ``harmonic'', the maximum is the potential theoretic capacity).  There is an enormous literature on the Ahlfors problem, in particular two sets of lecture notes \cite{Garnett, Pajot} and a textbook presentation in \cite[Section 8.8]{BCA}.

This is clearly analogous to the dual Widom maximizer problem so proofs of uniqueness for the Ahlfors problem should have analogs for our problem.  In his original paper, Ahlfors \cite{Ahl} considered an $n$-connected domain $\Omega$ (i.e., $\fre \subset \bbC$ has $n$ connected components) and proved that any maximizer, $g$, has limiting values for almost every point in $\partial\Omega$ (maybe only one sided if $\fre$ has a one dimensional component) with $|g(w)|=1$ for $w \in \partial\Omega$.  This can be used to prove uniqueness.  In \cite{Widom}, Widom proved that uniqueness for the dual maximizer by proving any maximizer had absolute value one on $\partial\Omega$.  The same idea occurs for general Parreau--Widom sets in Volberg--Yuditskii \cite{VY} who had the first proof of the result in this section.

A simple, elegant approach to uniqueness of the Ahlfors problem is due to Fisher \cite{Fish}.  We will modify his approach to accommodate change of character and the fact that the vanishing at $\infty$ is different.

\begin{theorem} \lb{T2.1} Let $\fre \subset \bbC$ be a PW set regular for potential theory.  Then for any character $\chi \in \pi_1(\Omega)^*$, the dual Widom maximizer (and so also the Widom minimizer) exists and is unique.
\end{theorem}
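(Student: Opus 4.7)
The plan is to adapt Fisher's uniqueness argument for the Ahlfors extremal, with two modifications: one must accommodate $\chi$-character automorphy of functions on $\wti\Omega$ instead of single-valuedness on $\Omega$, and the normalization is the value $g(\infty)$ rather than the derivative $f'(\infty)$. Suppose $Q_1, Q_2 \in H^\infty(\Omega,\chi)$ are two dual Widom maximizers, each with $\|Q_j\|_\infty = 1$ and $Q_j(\infty) = Q_\chi(\infty) > 0$. Form the convex combination $Q = (Q_1 + Q_2)/2$. Then $Q \in H^\infty(\Omega,\chi)$ satisfies $Q(\infty) = Q_\chi(\infty)$ and, by the triangle inequality, $\|Q\|_\infty \le 1$, so $Q$ is also a dual Widom maximizer. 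Lifting to the universal cover via $\x : \bbD \to \Omega$, we obtain $\tilde Q_1, \tilde Q_2, \tilde Q \in H^\infty(\bbD)$, each $\chi$-character automorphic in the sense $f \circ \gamma = \chi(\gamma) f$ for every deck transformation $\gamma$, and each taking value $Q_\chi(\infty) > 0$ at the origin.

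The crucial step is to show that every dual Widom maximizer lifts to an inner function on $\bbD$, i.e.\ $|\tilde Q| = 1$ non-tangentially a.e.\ on $\partial\bbD$. Write the canonical factorization $\tilde Q = I \cdot O$. Since $|\tilde Q|$ is invariant under the deck group, so are $|I|$ and $|O|$, which forces $O$ to be $\chi_O$-character automorphic for some $\chi_O \in \pi_1(\Omega)^*$, and hence $I$ to be $\chi\chi_O^{-1}$-character automorphic. If $|O(0)| < 1$, I would form the trial product $I \cdot O'$, where $O'$ is a suitably normalized $\chi_O$-character automorphic function of sup-norm at most $1$ with $|O'(0)| > |O(0)|$; its existence is supplied by Widom's Theorem \ref{T1.1} combined with a Hardy-space construction using the PW hypothesis. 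The product lies in $H^\infty(\Omega,\chi)$, has sup-norm at most $1$, and has value at $\infty$ strictly exceeding $Q_\chi(\infty)$, contradicting extremality. Therefore $|O(0)| = 1$, forcing $\tilde Q$ to be inner.

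Once $\tilde Q_1$, $\tilde Q_2$, and their average $\tilde Q$ all have unimodular boundary values a.e., the identity $\tilde Q(e^{i\theta}) = (\tilde Q_1(e^{i\theta}) + \tilde Q_2(e^{i\theta}))/2$ with all three quantities of modulus $1$ is equality in the triangle inequality for unit complex numbers, and forces $\tilde Q_1 = \tilde Q_2$ a.e.\ on $\partial\bbD$, hence throughout $\bbD$ by the standard $H^\infty$ boundary uniqueness theorem. Passing back to $\Omega$ yields $Q_1 = Q_2$, and the equivalence \eqref{1.16} transfers uniqueness to the Widom minimizer. The main obstacle is the inner-function step: removing the outer factor generically shifts the character from $\chi$ to $\chi\chi_O^{-1}$, so one must compensate with a character automorphic outer function carrying the character $\chi_O$ and chosen to strictly improve the value at $0$. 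Threading this character bookkeeping is where the PW hypothesis, via Theorem \ref{T1.1}, enters essentially, and it is also the point at which the argument genuinely departs from Fisher's single-valued treatment.
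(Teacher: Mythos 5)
Your approach is labeled as following Fisher, but it is not what Fisher does and it is not what the paper does; what you outline is closer to the Widom/Volberg--Yuditskii strategy of first showing that any dual Widom maximizer is inner and then using strict convexity of the unit circle. The paper explicitly flags that route (``Widom proved $\dots$ by proving any maximizer had absolute value one on $\partial\Omega$'' and ``the same idea occurs $\dots$ in Volberg--Yuditskii'') and then deliberately chooses Fisher's \emph{different} idea precisely to avoid it.

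The gap is in your ``crucial step.'' Suppose $\tilde{Q}=IO$ with $O$ outer, nonconstant, of character $\chi_O$. You want an $O'$ in $H^\infty(\Omega,\chi_O)$ with $\|O'\|_\infty\le 1$ and $O'(0)>O(0)$. But Theorem~\ref{T1.1} only gives you that $H^\infty(\Omega,\chi_O)$ is non-trivial; it does not say that $O$ is not already the best such function. In fact, a submultiplicativity argument shows it typically \emph{is}: if $Q_{\chi_O}$ is the dual maximizer for $\chi_O$, then $IQ_{\chi_O}$ is a $\chi$-trial function of norm $\le 1$ with value $I(0)Q_{\chi_O}(0)\ge I(0)O(0)=\tilde Q(0)$, and maximality of $\tilde Q$ forces $O(0)=Q_{\chi_O}(0)$. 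So $O$ itself is an outer dual Widom maximizer for $\chi_O$ and your proposed improvement does not exist; you are left needing exactly the statement you set out to prove (``an outer dual Widom maximizer is a unimodular constant''), now for $\chi_O$ in place of $\chi$, with no base for the regress. Establishing that maximizers are inner on a general PW set is genuinely hard -- it is a key lemma in \cite{VY} -- and cannot be conjured from Theorem~\ref{T1.1} plus ``a Hardy-space construction.''

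Fisher's actual device, which the paper adapts, avoids inner--outer factorization entirely. Write $f=\tfrac12(f_1+f_2)$, $k=\tfrac12(f_1-f_2)$; the parallelogram identity gives $|f|^2+|k|^2\le 1$. Pick $q\in H^\infty(\Omega,\overline\chi)$ with $\|q\|_\infty=1$, $q(\infty)\ne 0$ (this is where PW and Theorem~\ref{T1.1} are used), and set $g=qk^2/2\in H^\infty(\Omega,\chi)$, so that $|f|+|g|\le 1$ pointwise. Since $k(\infty)=0$, $g$ vanishes to some order $\ell\ge 1$ at $\infty$, and the competitor $h_\epsilon=f+\epsilon\bar a_\ell z^\ell g$ has $\|h_\epsilon\|_\infty\le 1$ for small $\epsilon>0$ (splitting $|z|>R$ from $|z|\le R$) while $h_\epsilon(\infty)=f(\infty)+\epsilon|a_\ell|^2>f(\infty)$, a contradiction unless $g\equiv 0$, i.e.\ $k\equiv 0$. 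This squaring of the difference, paired with the extra character-$\overline\chi$ factor $q$, is the mechanism you would need to import; the boundary-value argument you sketch requires a result you have not proved and that this proof was designed to sidestep.
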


\begin{remarks} 1.  As noted above this has already been proven by Volberg--Yuditskii \cite{VY} but starting from first principles, our proof is simpler.

2.  Uniqueness implies that the maximizer in the dual problem is an extreme point in $H^\infty(\Omega,\chi)_1$, the closed unit ball in $H^\infty(\Omega,\chi)$.  For if $Q_\chi = \tfrac{1}{2} (q_1+q_2)$ with $q_j \in H^\infty(\Omega,\chi)_1$, then by the maximum property, $q_j(\infty) = Q_\chi(\infty)$. So the $q_j$ are also maximizers, and hence equal to $Q_\chi$.
\end{remarks}

\begin{proof} Without loss, we can suppose $\chi \not\equiv 1$ since if $\chi \equiv 1$, the unique dual maximizer is $f \equiv 1$.  In particular, since $\chi \not\equiv 1$, we have that $f(\infty) < 1$ by the maximum principle.  Let $f_1$ and $f_2$ be two maximizers and define
\begin{equation}\label{2.1}
  f = \tfrac{1}{2}(f_1+f_2), \qquad k = \tfrac{1}{2}(f_1-f_2)
\end{equation}
Pick $q \in H^\infty(\Omega,\overline{\chi})$ with $q(\infty) \ne 0$ and $\norm{q}_\infty = 1$ which exists by the PW condition and Theorem \ref{T1.1}.

Since $\norm{f_j}_\infty=1$, we have that $\norm{f\pm k}_\infty = 1$ so
\begin{equation}\label{2.2}
  |f|^2+|k|^2 = \tfrac{1}{2}\left(|f+k|^2+|f-k|^2\right) \le 1
\end{equation}
Define
\begin{equation}\label{2.3}
  g=q k^2/2
\end{equation}
so $g \in H^\infty(\Omega,\chi)$.  By \eqref{2.2},
\begin{equation*}
  |g| \le \frac{1-|f|^2}{2}=(1-|f|)\left(\frac{1+|f|}{2}\right) \le 1-|f|
\end{equation*}
so
\begin{equation}\label{2.4}
  |g|+|f| \le 1
\end{equation}

Since $f_1(\infty)=f_2(\infty)$ is the maximum value, $g(\infty) = 0$, so if $g \not\equiv 0$, then, near $\infty$, we can write
\begin{equation}\label{2.5}
  g(z) = \sum_{k=\ell}^{\infty}a_kz^{-k}, \qquad a_\ell \ne 0
\end{equation}
for some $\ell \ge 1$.

We'll consider as a trial function
\begin{equation}\label{2.6}
  h_\epsilon(z) = f(z) + \epsilon \bar{a}_\ell z^\ell g(z)
\end{equation}
where $\epsilon$ will be picked below.  Since $f(\infty) \in (0,1)$, we can pick $\epsilon_0 > 0$ so that
\begin{equation}\label{2.7}
  f(\infty)+\epsilon_0 |a_\ell|^2 < 1
\end{equation}
Therefore, we can find $R>0$ so that
\begin{equation}\label{2.8}
  |z|>R \Rightarrow |f(z)| + \epsilon_0 |a_\ell| |z^\ell g(z)| < 1
\end{equation}
Pick $\epsilon_1 > 0$ so that
\begin{equation}\label{2.9}
  \epsilon_1 < \epsilon_0, \qquad \epsilon_1|a_\ell| R^\ell < 1
\end{equation}
We claim that $\norm{h_{\epsilon_1}} \le 1$, for by \eqref{2.8} if $|z| > R$, then $|h_{\epsilon_1}(z)| \le 1$, and, if $|z| \le R$, then by \eqref{2.9}
\begin{equation*}
  |h_{\epsilon_1}(z)| \le |f(z)| + \epsilon_1|a_\ell|R^\ell |g(z)| < |f(z)|+|g(z)| \le 1
\end{equation*}
by \eqref{2.4}.  Thus $h_{\epsilon_1}$ is a trial function for the dual Widom problem.

On the other hand,
\begin{equation}\label{2.10}
  h_{\epsilon_1}(\infty) = f(\infty) + \epsilon_1|a_\ell|^2 > f(\infty)
\end{equation}
violating maximality.  We conclude that $g \equiv 0$, so $k \equiv 0$, and $f_1=f_2$.
\end{proof}

\begin{proof} [Proof of Theorem 1.4]  Suppose we have a Totik--Widom bound
\begin{equation}\label{2.11}
  t_n \le D(C(\fre))^n
\end{equation}
Given $\chi_\infty \in \pi_1(\Omega)^*$, pick $n_j \to \infty$ so that $\chi_\fre^{n_j}$, the character of $B_\fre^{n_j}$, converges to $\chi_\infty$ (which we can do by the assumption of canonical generator).  Let
\begin{equation}\label{2.12}
  f_j(z) = \frac{T_{n_j}(z)B_\fre(z)^{n_j}}{C(\fre)^{n_j}}
\end{equation}

By the maximum principle,
\begin{equation*}
     \norm{f_j}_\infty \le \sup_{z \to \fre}|f_j(z)| \le t_{n_j}C(\fre)^{-n_j} \le D
\end{equation*}
so by Montel's theorem, we can find $j_k \to \infty$, so that $f_{j_k}$ converges to $f_\infty$ uniformly on compacts.  Since $T_{n_j}$ is monic and $B_\fre(z)=C(\fre)/z + \textrm{O}(z^{-2})$, we have $f_j(\infty) = 1$ and, therefore, $f_\infty$ is non-zero.  Clearly, $f_\infty \in H^\infty(\Omega,\chi_\infty)$.  By Theorem \ref{T1.1}, $\fre$ obeys a PW condition.
\end{proof}

\section{Continuity of the Widom Minimizer} \lb{s3}

In this section, we study continuity properties (in $\chi$) of $Q_\chi(z)$, $F_\chi(z)$ and $\norm{F_\chi}_\infty$.  We'll show there is continuity if and only if the DCT holds.  Applying this to $n \to F_{\chi_\fre^n}$, we'll see that DCT implies almost periodicity.

\begin{theorem} \lb{T3.1} Let $\fre \subset \bbC$ be a compact, PW and DCT set that is regular for potential theory.  Then $\chi \mapsto Q_\chi$ and $\chi \mapsto F_\chi$ are continuous in the topology of uniform convergence on compact subsets of $\wti{\Omega}$.  Moreover, $\chi \mapsto \norm{F_\chi}_\infty$ is continuous.  Conversely, if $\chi \mapsto \norm{F_\chi}_\infty$ is continuous for $\fre$ a regular PW set, then $\fre$ is a DCT set.
\end{theorem}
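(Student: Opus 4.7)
The plan is to derive everything from three ingredients already in the paper: Theorem \ref{T1.2} of Hayashi--Hasumi, uniqueness of the dual Widom maximizer (Theorem \ref{T2.1}), and a normal family argument via Montel's theorem, together with the relations \eqref{1.16}.

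The statements about $\norm{F_\chi}_\infty$ are a reformulation of Theorem \ref{T1.2}. By \eqref{1.16}, $Q_\chi(\infty) = 1/\norm{F_\chi}_\infty$, and since $F_\chi(\infty)=1$ the minimization gives $\norm{F_\chi}_\infty \ge 1$, so $Q_\chi(\infty) \in (0,1]$. The map $t \mapsto 1/t$ is a homeomorphism between $(0,1]$ and $[1,\infty)$, so $\chi \mapsto Q_\chi(\infty)$ is continuous if and only if $\chi \mapsto \norm{F_\chi}_\infty$ is. Theorem \ref{T1.2} identifies the first condition with DCT, which simultaneously settles continuity of $\chi \mapsto \norm{F_\chi}_\infty$ under the DCT hypothesis and the converse direction of the theorem.

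For continuity of $\chi \mapsto Q_\chi$ under DCT, I would fix $\chi_\infty$ and take $\chi_n \to \chi_\infty$ in $\pi_1(\Omega)^*$. The family $\{Q_{\chi_n}\}$ has sup-norm $1$ on $\wti\Omega$, so Montel's theorem produces a subsequence converging uniformly on compact subsets to some analytic $f$ with $\norm{f}_\infty \le 1$. Passing to the limit in the automorphy relation $Q_{\chi_n}(\tau_\gamma z) = \chi_n([\gamma]) Q_{\chi_n}(z)$ for each deck transformation $\tau_\gamma$ and using $\chi_n([\gamma]) \to \chi_\infty([\gamma])$, the limit $f$ is $\chi_\infty$-automorphic. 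The value at infinity is $f(\infty) = \lim Q_{\chi_n}(\infty) = Q_{\chi_\infty}(\infty)$ by the already-established continuity of $\chi \mapsto Q_\chi(\infty)$. Hence $f$ is a dual Widom trial function attaining the supremum in \eqref{1.15B}, so by uniqueness (Theorem \ref{T2.1}), $f = Q_{\chi_\infty}$. Since every subsequence of $\{Q_{\chi_n}\}$ has a sub-subsequence with the same limit, the full sequence converges to $Q_{\chi_\infty}$ uniformly on compact subsets of $\wti\Omega$.

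Continuity of $\chi \mapsto F_\chi$ is then immediate from \eqref{1.16}: writing $F_\chi = Q_\chi/Q_\chi(\infty)$, the denominators $Q_{\chi_n}(\infty)$ converge to the strictly positive limit $Q_{\chi_\infty}(\infty)$ (positivity comes from the PW condition via Theorem \ref{T1.1}, which guarantees a bounded $\chi_\infty$-automorphic function with nonzero value at infinity), so the quotients converge uniformly on compacts. The only step requiring any care beyond quoting prior results is the passage to the limit in the $\chi_n$-automorphy equation, which uses that evaluation $\chi \mapsto \chi([\gamma])$ is continuous on the compact abelian group $\pi_1(\Omega)^*$. This is the only genuinely new piece of bookkeeping; apart from it, the proof is a packaging exercise that combines Theorems \ref{T1.2} and \ref{T2.1} with a standard Montel subsequence argument.
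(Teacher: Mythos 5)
Your argument is correct and follows essentially the same route as the paper: reduce everything to continuity of $\chi \mapsto Q_\chi(\infty)$ via Theorem \ref{T1.2} and the relations \eqref{1.16}, then use a Montel subsequence argument plus uniqueness (Theorem \ref{T2.1}) to upgrade to continuity of $\chi\mapsto Q_\chi$. You spell out two details the paper leaves implicit --- passing to the limit in the automorphy relation so the subsequential limit is genuinely $\chi_\infty$-automorphic, and the sub-subsequence argument to get full-sequence convergence --- but the substance is the same.
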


\begin{proof} By Theorem \ref{T1.2}, if $\fre$ is a DCT set, then $Q_\chi(\infty)$ is continuous.  If $\chi_n \to \chi$ for some sequence so that $Q_{\chi_n}$ converges to a function $g$ uniformly on compact subsets of $\wti{\Omega}$, then by continuity, $g(\infty) = Q_\chi(\infty)$ and $\norm{g}_\infty \le 1$.  It follows by uniqueness of the minimizer that $g = Q_\chi$.  By Montel's Theorem, $\chi \mapsto Q_\chi$ is continuous.  Since $F_\chi(z) = Q_\chi(z)/Q_\chi(\infty)$ and $\norm{F_\chi}_\infty = 1/Q_\chi(\infty)$, we conclude continuity of $F_\chi$ and $\norm{F_\chi}_\infty$.

The converse follows from Theorem \ref{1.2} and $Q_\chi(\infty) = 1/\norm{F_\chi}_\infty$
\end{proof}

\begin{theorem} \lb{T3.2} Let $\fre \subset \bbC$ be a compact, PW and DCT set that is regular for potential theory. Then $n \mapsto F_{\chi_\fre^n}(z)$ and  $n \mapsto Q_{\chi_\fre^n}(z)$ are almost periodic uniformly for $z$ in compact subsets of $\wti{\Omega}$.  Moreover, $n \mapsto \norm{F_{\chi_\fre^n}}_\infty$ is a bounded almost periodic function.
\end{theorem}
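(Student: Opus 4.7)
The plan is to combine Theorem \ref{T3.1} with the standard fact that the composition of a homomorphism $\bbZ \to G$ into a compact abelian topological group $G$ with a continuous function $\Phi: G \to X$ (where $X$ is a metric space) yields a Bohr almost periodic sequence $n \mapsto \Phi(\varphi(n))$.

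In our setting, take $G = \pi_1(\Omega)^*$, which is a compact abelian group as the Pontryagin dual of the abelianization of $\pi_1(\Omega)$, and let $\varphi:\bbZ \to G$ be the homomorphism $n \mapsto \chi_\fre^n$. For each compact $K \subset \wti\Omega$, Theorem \ref{T3.1} tells us that the three maps
\begin{equation*}
  \chi \mapsto F_\chi|_K \in C(K), \qquad \chi \mapsto Q_\chi|_K \in C(K), \qquad \chi \mapsto \norm{F_\chi}_\infty \in \bbR
\end{equation*}
are continuous (with $C(K)$ carrying the sup norm). Applying the general principle to each in turn yields Bohr almost periodicity of $n \mapsto F_{\chi_\fre^n}|_K$, of $n \mapsto Q_{\chi_\fre^n}|_K$, and of $n \mapsto \norm{F_{\chi_\fre^n}}_\infty$. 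Boundedness of the last sequence is automatic, as a continuous real-valued function on the compact space $G$ has bounded range.

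For completeness I sketch the general principle. Fix $\epsilon > 0$. By uniform continuity of $\Phi$ on the compact group $G$, pick a symmetric neighborhood $V$ of the identity such that $gh^{-1} \in VV$ implies $d(\Phi(g),\Phi(h)) < \epsilon$. The closed subgroup $H := \overline{\varphi(\bbZ)}$ is compact, so finitely many translates $g_1 V, \dots, g_k V$ cover it; choose $n_i \in \bbZ$ with $\varphi(n_i) \in g_i V$ and set $N = \max_i |n_i|$. For any $m \in \bbZ$, some index $i$ satisfies $\varphi(m) \in g_i V$, whence $\varphi(m - n_i) \in V V^{-1} = V V$ and therefore $d(\Phi(\varphi(m)), \Phi(\varphi(m - n_i))) < \epsilon$ uniformly in $m$. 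Thus every window of length $2N + 1$ in $\bbZ$ contains an $\epsilon$-almost period of $\Phi \circ \varphi$, which is exactly Bohr almost periodicity. Since all real content has been absorbed into Theorem \ref{T3.1}, no substantive obstacle remains; the present theorem is essentially a direct corollary obtained by recognizing that compactness of the character group converts continuity into almost periodicity along any $\bbZ$-orbit.
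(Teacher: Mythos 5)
Your proof is correct in substance and reaches the same conclusion as the paper, but via a different characterization of almost periodicity. The paper uses the Bochner-style definition: a sequence is almost periodic iff its family of translates lies in a compact set; it then simply notes that $\{F_{\chi_\fre^{n+m}}\}_{m\in\bbZ}$ is contained in $\{F_\chi\}_{\chi\in\pi_1(\Omega)^*}$, which is compact by Theorem~\ref{T3.1} and compactness of $\pi_1(\Omega)^*$. You instead invoke the Bohr definition (relatively dense $\epsilon$-periods) and prove from scratch the general lemma that $\Phi\circ\varphi$ is Bohr almost periodic whenever $\Phi$ is continuous on a compact group and $\varphi:\bbZ\to G$ is a homomorphism. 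Both routes hinge on exactly the same two ingredients --- continuity of $\chi\mapsto F_\chi$, $Q_\chi$, $\norm{F_\chi}_\infty$ from Theorem~\ref{T3.1}, and compactness of $\pi_1(\Omega)^*$ --- so the content is the same; the paper's version is shorter because it appeals to the Bochner criterion directly, while yours is self-contained at the cost of carrying out the covering argument. One small slip in your covering argument: from $\varphi(m)\in g_iV$ and $\varphi(n_i)\in g_iV$ you correctly get $\varphi(m-n_i)\in VV^{-1}=VV$, but the asserted consequence $d(\Phi(\varphi(m)),\Phi(\varphi(m-n_i)))<\epsilon$ does not follow, since that would require $\varphi(n_i)=\varphi(m)\varphi(m-n_i)^{-1}\in VV$, which need not hold ($g_i$ can be far from the identity). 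What you actually want, and what your displayed facts do give, is that $p:=m-n_i$ is an $\epsilon$-period: for every $m'$ one has $\varphi(m'+p)\varphi(m')^{-1}=\varphi(p)\in VV$, hence $d(\Phi(\varphi(m'+p)),\Phi(\varphi(m')))<\epsilon$. With that replacement your conclusion that every window of length $2N+1$ contains an $\epsilon$-period is correct, and the rest of the argument stands.
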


\begin{proof} Almost periodicity of a function, $f$, on $\bbZ$ can be defined in terms of the family $f_m \equiv f(\cdot - m)$ lying in a compact family of functions.  Since $\pi_1(\Omega)^*$ is compact, $\{F_\chi\}_{\chi \in \pi_1(\Omega)^*}$ and $\{Q_\chi\}_{\chi \in \pi_1(\Omega)^*}$ are the required compact families.  Since $Q_\chi(\infty)$ is a continuous function, it takes its minimum value which is always non-zero.  Thus $Q_\chi(\infty)$ is bounded away from zero and thus, $\norm{F_\chi}_\infty = 1/Q_\chi(\infty)$ is bounded.
\end{proof}

We now turn to the proof of Theorem \ref{T1.5}.  The first two of four lemmas require neither almost periodicity nor canonical generator.  We'll focus on the dual maximizer, $Q_\chi$, given by \eqref{1.16}.

\begin{lemma} \lb{L3.3}  Let $\fre$ be a regular PW set.  Then $\chi \mapsto Q_\chi(\infty)$, the map from $\pi_1(\Omega)^*$ to $(0,1]$, is upper semicontinuous, i.e.,
\begin{equation}\label{3.1}
  \chi_j \to \chi \; \Rightarrow \; \limsup_{j \to \infty} Q_{\chi_j}(\infty) \le Q_\chi (\infty)
\end{equation}
\end{lemma}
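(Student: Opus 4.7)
The plan is to extract a limit of the dual Widom maximizers $Q_{\chi_j}$ via a normal-families argument and identify that limit as an admissible function for the character $\chi$. Upper semicontinuity will then follow from the maximizing property of $Q_\chi$.

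First, I would pass to a subsequence (not renamed) along which $Q_{\chi_j}(\infty)$ converges to $\alpha := \limsup_{j \to \infty} Q_{\chi_j}(\infty)$. Since $\norm{Q_{\chi_j}}_\infty = 1$ for each $j$, the family $\{Q_{\chi_j}\}$, viewed as analytic functions on $\wti{\Omega}$, is uniformly bounded by $1$. Montel's theorem (\cite[Section 6.2]{BCA}) then provides a further subsequence converging, uniformly on compact subsets of $\wti{\Omega}$, to an analytic function $g\colon\wti{\Omega}\to\bbC$ with $g(\infty)=\alpha$ and $|g|\le 1$ throughout $\wti{\Omega}$.

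Next, I would identify the character of $g$. For any deck transformation $T_\gamma$ associated with $\gamma\in\pi_1(\Omega)$, character-automorphy gives
\[
  Q_{\chi_j}(T_\gamma w)=\chi_j([\gamma])\,Q_{\chi_j}(w), \qquad w\in\wti{\Omega}.
\]
Convergence $\chi_j\to\chi$ in $\pi_1(\Omega)^*$ is pointwise convergence of characters, so passing to the limit yields $g(T_\gamma w)=\chi([\gamma])g(w)$. Hence $g\in H^\infty(\Omega,\chi)$ with $\norm{g}_\infty\le 1$ and $g(\infty)=\alpha\ge 0$. If $\alpha=0$, then \eqref{3.1} holds trivially; otherwise $g/\norm{g}_\infty$ is an admissible dual Widom trial function for $\chi$, giving
\[
  \alpha = g(\infty) \le \frac{g(\infty)}{\norm{g}_\infty} \le Q_\chi(\infty).
\]

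The main obstacle is the middle step: legitimately transferring the character through the limit. This is straightforward once everything is viewed on $\wti{\Omega}$ and one recalls that the natural topology on $\pi_1(\Omega)^*$ is that of pointwise convergence, but it is the one place where the hypothesis $\chi_j\to\chi$ is actually used. The remainder is standard extremal-function and normal-family bookkeeping, and notably uses neither the DCT nor the almost-periodicity hypotheses flagged before the lemma, consistent with the remark that Lemma \ref{L3.3} does not need them.
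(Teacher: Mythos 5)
Your proof is correct and follows essentially the same route as the paper: extract a normal-families limit $g$ of the $Q_{\chi_j}$, verify that $g$ inherits the character $\chi$ via pointwise convergence of characters, and invoke the maximality of $Q_\chi$. You are a bit more explicit than the paper about the character-transfer step and the normalization $g/\norm{g}_\infty$, but these are just filled-in details of the same argument.
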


\begin{proof} By Montel's  theorem, we can always pick a subsequence so that $Q_{\chi_{j_n}}(\infty) \to \limsup_{j \to \infty} Q_{\chi_j}(\infty)$ and so that $Q_{\chi_{j_n}}$ has a pointwise limit, $g$, on the universal cover which has $\norm{g}_\infty \le 1$ and for which the convergence is uniform on compact subsets of the universal cover.  Since $\chi_{j_n} \to \chi$, $g$ is a trial function for the dual Widom problem with character $\chi$.  Since $Q_\chi$ is a maximizer, $g(\infty) \le Q_\chi(\infty)$, i.e., \eqref{3.1} holds.
\end{proof}

\begin{lemma} \lb{L3.4} Let $\fre$ be a regular PW set.  If $\chi \mapsto Q_\chi(\infty)$ is continuous at $\chi = \bdone$ (i.e., we know that  $\chi_j \to \bdone \Rightarrow Q_\chi(\infty) \to 1$), then $\chi \mapsto Q_\chi(\infty)$  is continuous on $\pi_1(\Omega)^*$.
\end{lemma}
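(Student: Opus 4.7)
The plan is to derive continuity at an arbitrary character $\chi$ from continuity at $\bdone$ by exploiting the group structure of $\pi_1(\Omega)^*$ together with the multiplicativity of the class of trial functions. By Lemma \ref{L3.3}, upper semicontinuity is already known, so the task reduces to showing lower semicontinuity of $\chi \mapsto Q_\chi(\infty)$ at every $\chi$.

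I would proceed as follows. Fix $\chi \in \pi_1(\Omega)^*$ and a sequence $\chi_j \to \chi$. Note that $\chi_j \overline{\chi} \to \bdone$ in the character group. Since $\fre$ is PW, the dual Widom maximizers $Q_\chi$ and $Q_{\chi_j \overline{\chi}}$ exist (by Theorem \ref{T2.1}), each has supremum norm $1$, and each takes a positive value at infinity. Form the product
\begin{equation}
g_j(z) = Q_\chi(z)\, Q_{\chi_j\overline{\chi}}(z).
\end{equation}
Then $g_j \in H^\infty(\Omega,\chi_j)$ because the character multiplies: $\chi \cdot (\chi_j\overline{\chi}) = \chi_j$. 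Moreover $\|g_j\|_\infty \le \|Q_\chi\|_\infty \|Q_{\chi_j\overline{\chi}}\|_\infty = 1$, and $g_j(\infty) = Q_\chi(\infty) Q_{\chi_j\overline{\chi}}(\infty) > 0$. Hence $g_j$ is a trial function for the dual Widom problem with character $\chi_j$, giving
\begin{equation}
Q_{\chi_j}(\infty) \ge Q_\chi(\infty)\, Q_{\chi_j\overline{\chi}}(\infty).
\end{equation}
The hypothesis of continuity at $\bdone$ tells us that $Q_{\chi_j\overline{\chi}}(\infty) \to Q_{\bdone}(\infty) = 1$ as $j \to \infty$. Therefore
\begin{equation}
\liminf_{j\to\infty} Q_{\chi_j}(\infty) \ge Q_\chi(\infty),
\end{equation}
which is the desired lower semicontinuity. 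Combining this with the upper semicontinuity of Lemma \ref{L3.3} yields $\lim_{j\to\infty} Q_{\chi_j}(\infty) = Q_\chi(\infty)$, so continuity holds at $\chi$.

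There is no real obstacle here beyond the initial idea of multiplying maximizers; the only small points to verify are that $Q_\chi \cdot Q_{\chi_j\overline\chi}$ indeed has character $\chi_j$ (which is immediate since characters multiply under pointwise multiplication of automorphic functions) and that the value $Q_{\bdone}(\infty)$ equals $1$ (which holds since the constant function $1$ is a trial function for $\chi = \bdone$, and by the maximum principle no trial function can exceed it). Everything else is just the symmetry of translation in the compact abelian group $\pi_1(\Omega)^*$.
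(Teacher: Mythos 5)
Your proof is correct and is essentially the same as the paper's: both form the product $Q_\chi\, Q_{\chi_j \chi^{-1}}$ as a trial function for the $\chi_j$ problem to get lower semicontinuity, and then combine with the upper semicontinuity of Lemma \ref{L3.3}. (Your $\chi_j\overline{\chi}$ is the paper's $\chi_j/c$, since $\overline{\chi}=\chi^{-1}$ for unitary characters.)
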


\begin{proof} Suppose $\chi_j \to c$.  Then $\chi_j/c \to \bdone$.  Since $Q_cQ_{\chi_j/c}$ is a trial function for the $\chi_j$ dual maximizer problem, we have that
\begin{equation}\label{3.2}
   Q_c(\infty)Q_{\chi_j/c}(\infty) \le Q_{\chi_j}(\infty)
\end{equation}
By hypothesis, $Q_{\chi_j/c}(\infty) \to 1$, so \eqref{3.2} implies that
\begin{equation}\label{3.3}
  Q_c(\infty) \le \liminf_{j \to \infty} Q_{\chi_j}(\infty).
\end{equation}
This and \eqref{3.1} imply that $Q_{\chi_j}(\infty) \to Q_c(\infty)$.
\end{proof}

\begin{lemma} \lb{L3.5}  Let $\fre$ be a regular PW set.  Suppose $n \mapsto \norm{F_n}_\infty$ is a bounded almost periodic function and that $\chi_\fre^{n_j} \to \bdone$.  Then $Q_{\chi_\fre^{n_j}} \to 1$.
\end{lemma}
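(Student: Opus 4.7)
My plan is to prove the equivalent statement that $a_{n_j} := \|F_{n_j}\|_\infty \to 1$; since $Q_\chi(\infty) = 1/\|F_\chi\|_\infty$, this gives $Q_{\chi_\fre^{n_j}}(\infty) \to 1$, and then Montel's theorem together with the uniqueness of the dual Widom maximizer (Theorem~\ref{T2.1}) upgrades this to $Q_{\chi_\fre^{n_j}} \to 1$ uniformly on compact subsets of $\wti{\Omega}$, since any subsequential limit is a $\bdone$-character automorphic function bounded by $1$ with value $1$ at $\infty$, and the only such function is $Q_{\bdone} \equiv 1$. I argue by contradiction: suppose (after passing to a subsequence, and using $a_n \ge 1$ because $Q_\chi(\infty) \le 1$) that $a_{n_j} \to a^* > 1$.

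The central tool is the ergodic structure on the hull $\mathcal{H}(a) \subset \ell^\infty(\bbZ)$, which, since $a$ is almost periodic, is a compact minimal set under the translation action with a unique invariant probability measure; thus all $b \in \mathcal{H}(a)$ share the same mean $M(b) = M(a) := \lim_N \tfrac{1}{2N+1}\sum_{n=-N}^N a(n)$. By almost periodicity, I pass to a subsequence so that $a(\cdot + n_j) \to g$ in $\ell^\infty(\bbZ)$ with $g \in \mathcal{H}(a)$; then $g(0) = a^* > 1 = a(0)$ and $M(g) = M(a)$. By Montel's theorem and a diagonal argument over $m \in \bbZ$, I further pass to a subsequence so that $F_{n_j + m} \to G_m$ uniformly on compact subsets of $\wti{\Omega}$ for every $m$. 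Using $\chi_\fre^{n_j} \to \bdone$, the function $G_m$ inherits character $\chi_\fre^m$, and it satisfies $G_m(\infty) = 1$ and $\|G_m\|_\infty \le g(m)$ by lower semicontinuity of the sup norm under locally uniform limits.

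The key inequality comes for free: $G_m$ is a trial function for the Widom minimization problem at character $\chi_\fre^m$, so
\begin{equation*}
  a_m = \|F_m\|_\infty \le \|G_m\|_\infty \le g(m),
\end{equation*}
i.e. $h := g - a \ge 0$ pointwise on $\bbZ$. Since $h(0) = a^* - 1 > 0$, uniform recurrence of almost periodic functions applies: choosing $\veps < h(0)/2$, the $\veps$-almost periods $T$ of $h$ form a relatively dense set, and each gives $h(T) > h(0)/2$, so these $T$ have positive lower density and $M(h) > 0$. This contradicts $M(h) = M(g) - M(a) = 0$, so $a^* = 1$ and the proof is complete.

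I expect the delicate bookkeeping to be the nested passages to subsequences (over $m$, over pointwise convergence of $F_{n_j + m}$, and over $\ell^\infty$-convergence of $a(\cdot + n_j)$), and verifying that the constant-mean property $M(g) = M(a)$ transfers to the $\ell^\infty$-limit $g$ (this uses that $\tfrac{1}{2N+1}\sum_{-N}^N a(n+n_j)$ differs from $\tfrac{1}{2N+1}\sum_{-N}^N g(n)$ by at most $\sup_n |a(n+n_j)-g(n)|$, which tends to $0$ uniformly in $N$). The conceptual heart of the argument is that almost periodicity of $a$ combined with the trial-function bound $g \ge a$ forces equality of means, hence $g \equiv a$, which is incompatible with $g(0) > a(0)$.
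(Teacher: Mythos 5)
Your proof is correct, and it reaches the same conclusion by a genuinely different route. Both proofs share the same two building blocks: Bochner compactness of the translates of $a(n) = \norm{F_n}_\infty$ (equivalently, the compact group $\bbK$ extending $\bbZ$), and the observation that a locally uniform limit of dual Widom trial functions for converging characters is itself a trial function, which is where the hypothesis $\chi_\fre^{n_j}\to\bdone$ enters. Where you diverge is in how the contradiction is extracted. The paper works directly in the Bohr compactification: it sets $A=B^{-1}$ on $\bbK$, passes to a subsequence with $n_j\to\alpha\in\bbK$, and for a fixed $n_s$ uses a limit of $Q_{\chi_\fre^{n_s-n_j}}$ as a trial function to get $A(n_s)\ge A(n_s-\alpha)$; sending $n_s\to\alpha$ then yields $q\ge A(0)=1$ purely from continuity of $A$. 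You instead fix a hull element $g=\lim a(\cdot+n_j)$, derive the pointwise inequality $a(m)\le g(m)$ from the trial functions $G_m=\lim F_{n_j+m}$, and then show that $g(0)>a(0)$ is incompatible with $M(g)=M(a)$ via uniform recurrence and positivity of the mean of a non-identically-zero nonnegative almost periodic function. This trades the paper's one-line continuity step for the mean-value/relatively-dense-almost-periods machinery, so it is somewhat heavier, but the heart of it — a limit of translates of $F$ being a trial function forces $a\le g$ on $\bbZ$, which almost periodicity cannot tolerate when $g(0)>a(0)$ — is a clean alternative to the paper's diagonal argument, and you also get the locally uniform convergence of $Q_{\chi_\fre^{n_j}}$ essentially for free from uniqueness. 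One stylistic remark: you do not need the full unique-ergodicity framing for the hull; the direct estimate you give at the end (continuity of the Bohr mean in $\ell^\infty$ plus translation invariance) already establishes $M(g)=M(a)$, which is all that is used.
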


\begin{proof}  By hypothesis, there exists a compact additive group $\bbK$ and a bounded continuous function, $B$, on $\bbK$ so that $\bbZ$ is a dense subgroup in $\bbK$ and $B(n) = \norm{F_n}_\infty$.  Let $A(\alpha) = B(\alpha)^{-1}$ which is also continuous on $\bbK$, bounded away from $0$ (and bounded above by 1) with
\begin{equation}\label{3.4}
  Q_{\chi_\fre^{n}}(\infty) = A(n)
\end{equation}
By passing to a subsequence, we can suppose that $n_j \to \alpha \in \bbK$ and that $Q_{\chi_\fre^{n_j}}(\infty)$ has a limit $q$.

Fix $n_s$.  By passing to a further subsequence, we can suppose that $Q_{\chi_\fre^{n_s-n_j}}$ has a limit, $g$, on the universal cover.  Since $\chi_\fre^{n_j} \to \bdone$, $g$ is a trial function for the $\chi_\fre^{n_s}$ problem so
\begin{equation}\label{3.5}
  Q_{\chi_\fre^{n_s}}(\infty) \ge g(\infty) = \lim_{n_j \to \infty} A(n_s-n_j) = A(n_s-\alpha)
\end{equation}
by the continuity of $A$.  Now take $n_s \to \infty$.  By definition of $q$, we have
\begin{equation*}
  q=\lim_{n_s \to \infty} Q_{\chi_\fre^{n_s}}(\infty) \ge \limsup_{n_s \to \infty} A(n_s-\alpha) = A(0) = 1
\end{equation*}
since $n_s \to \alpha$ and $A(0) = 1$ by \eqref{3.4}.  Thus $q \ge 1$.  Since $Q_\chi(\infty) \in (0,1]$, we conclude that $q=1$, i.e., $1$ is the only limit point of $Q_{\chi_\fre^{n_j}}(\infty)$ proving the lemma.
\end{proof}

\begin{lemma} \lb{L3.6} Let $\fre$ be a regular PW set. Suppose that $n \to \norm{F_n}_\infty$ is a bounded almost periodic function and that $\fre$ has a canonical generator.  Then $\chi \mapsto Q_\chi(\infty)$ is continuous at $\chi = \bdone$, i.e.,
\begin{equation}
\chi_j \to \bdone \; \Rightarrow \; \lim_{j \to \infty} Q_{\chi_j} (\infty) = 1
\end{equation}
\end{lemma}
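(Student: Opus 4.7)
My plan is to transport the question to the Bohr compactification $\bbK = b\bbZ$, on which the almost periodicity of $\norm{F_n}_\infty$ becomes continuity of a function and the canonical generator becomes a continuous surjective homomorphism $p\colon\bbK\to\pi_1(\Omega)^*$ (via the universal property of $b\bbZ$). Concretely, $A(n):=Q_{\chi_\fre^n}(\infty)=1/\norm{F_n}_\infty$ is bounded almost periodic on $\bbZ$, hence extends to a continuous $\tilde A\colon\bbK\to(0,1]$ with $\tilde A(0)=1$, and the supermultiplicativity $Q_{\chi_1\chi_2}(\infty)\ge Q_{\chi_1}(\infty)Q_{\chi_2}(\infty)$ (trial-function product) passes to $\tilde A(\alpha+\beta)\ge\tilde A(\alpha)\tilde A(\beta)$. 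Surjectivity of $p$ follows because its image contains the dense set $\{\chi_\fre^n\}$ and is compact. Upper semicontinuity of $Q_\chi(\infty)$ from Lemma \ref{L3.3}, applied along integer nets $n_\nu\to\alpha$ in $\bbK$, yields
\[
\tilde A(\alpha)\le Q_{p(\alpha)}(\infty),\qquad \alpha\in\bbK.
\]

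The crucial intermediate step is to show $\tilde A\equiv 1$ on $\ker p$, which I view as the correct net-theoretic generalization of Lemma \ref{L3.5} from $0\in\bbK$ to arbitrary kernel elements. For $\alpha\in\ker p$ and a fixed integer $m$, I would pick integers $n_\nu\to\alpha$ in $\bbK$ and pass along a Montel subnet of $\{Q_{\chi_\fre^{m-n_\nu}}\}$ to a $\chi_\fre^m$-character-automorphic limit $g$ with $\norm{g}_\infty\le 1$ and $g(\infty)=\lim A(m-n_\nu)=\tilde A(m-\alpha)$. Since $g$ is a dual Widom trial function for $\chi_\fre^m$, this gives $A(m)\ge\tilde A(m-\alpha)$. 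Letting $m$ range along integers with $m\to\alpha$ in $\bbK$, continuity of $\tilde A$ gives $A(m)\to\tilde A(\alpha)$ and $\tilde A(m-\alpha)\to\tilde A(0)=1$, so $\tilde A(\alpha)\ge 1$; combined with the preceding inequality $\tilde A(\alpha)\le Q_{\bdone}(\infty)=1$, we get $\tilde A(\alpha)=1$.

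With these tools the lemma follows in one step. Given $\chi_j\to\bdone$ in $\pi_1(\Omega)^*$, Lemma \ref{L3.3} already gives $\limsup_j Q_{\chi_j}(\infty)\le 1$, so it suffices to rule out a subsequence with $Q_{\chi_{j_k}}(\infty)\to q<1$. I would lift each $\chi_{j_k}$ to $\alpha_k\in p^{-1}(\chi_{j_k})$ by surjectivity of $p$ and extract a subnet $\alpha_{k_\lambda}\to\alpha_\infty$ by compactness of $\bbK$; continuity of $p$ forces $\alpha_\infty\in\ker p$, the kernel identity gives $\tilde A(\alpha_\infty)=1$, so $\tilde A(\alpha_{k_\lambda})\to 1$ by continuity of $\tilde A$, while $\tilde A(\alpha_{k_\lambda})\le Q_{\chi_{j_{k_\lambda}}}(\infty)\to q$. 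This forces $q\ge 1$, a contradiction.

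The hardest part will be the kernel identity $\tilde A|_{\ker p}\equiv 1$. A direct factoring $\chi_j=\chi_\fre^{n_j}\cdot(\chi_j\chi_\fre^{-n_j})$ combined with supermultiplicativity of $Q_\chi(\infty)$ is circular, since the residual factor $\chi_j\chi_\fre^{-n_j}$ is again an arbitrary sequence tending to $\bdone$ and carries exactly the problem we are trying to solve; the Bohr compactification handles all such approximations simultaneously, and identifying $\tilde A$ with $1$ on the whole kernel is exactly the piece needed to close the loop.
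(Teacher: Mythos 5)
Your proposal is correct, but it takes a genuinely different route from the paper's. The paper's proof is a short, direct metric argument: since $\pi_1(\Omega)^*$ is compact, separable and hence metrizable and $\{\chi_\fre^m\}$ is dense, one approximates each $\chi_j$ by a power $\chi_\fre^{k(j)}$ chosen close enough that upper semicontinuity (Lemma \ref{L3.3}) gives $Q_{\chi_\fre^{k(j)}}(\infty) \le Q_{\chi_j}(\infty) + 2^{-j}$; the sequence $\chi_\fre^{k(j)}$ then tends to $\bdone$, so Lemma \ref{L3.5} gives $Q_{\chi_\fre^{k(j)}}(\infty) \to 1$, whence $\liminf Q_{\chi_j}(\infty) \ge 1$. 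You instead package the whole situation on the Bohr compactification $b\bbZ$: extend $A$ to $\tilde A$ on $b\bbZ$, build the continuous surjective homomorphism $p\colon b\bbZ \to \pi_1(\Omega)^*$ from the canonical generator, prove the kernel identity $\tilde A\equiv 1$ on $\ker p$ by a net-theoretic rerun of the Montel argument of Lemma \ref{L3.5}, and then conclude via lifting, compactness, and the usc inequality $\tilde A(\alpha)\le Q_{p(\alpha)}(\infty)$. The two proofs rest on the same two ingredients --- the usc of Lemma \ref{L3.3} applied in the ``safe'' direction, and the Montel/trial-function trick behind Lemma \ref{L3.5} --- but your version is more conceptual and makes the role of $\ker p$ explicit, at the cost of working on the non-metrizable $b\bbZ$ (forcing nets everywhere) and of having to verify the homomorphism and surjectivity properties of $p$. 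The paper's version stays on the metrizable group $\pi_1(\Omega)^*$ (and on the smaller, metrizable compactification $\bbK$ generated by the single almost periodic function $\norm{F_n}_\infty$ in Lemma \ref{L3.5}), so sequences suffice and the diagonal extraction $k(j)=m_j(\ell_j)$ is elementary. One small remark: the supermultiplicativity $\tilde A(\alpha+\beta)\ge\tilde A(\alpha)\tilde A(\beta)$ you mention in passing is in fact never used in your argument --- the inequality $A(m)\ge\tilde A(m-\alpha)$ that you need is genuinely stronger and comes from the Montel limit having character exactly $\chi_\fre^m$, precisely because $\chi_\fre^{-n_\nu}\to\bdone$ --- which is exactly the non-circular mechanism you correctly identify as essential.
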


\begin{proof}  $\pi_1(\Omega)^*$ is a compact, separable group, so metrizable.  Let $d$ be a metric on $\pi_1(\Omega)^*$ yielding the usual topology.  Since $\{\chi_\fre^m\}$ is dense, we can pick integers $m_j(\ell)$ for each $j$ and $\ell = 1, 2, \dots$ so that $d(\chi_j,\chi_\fre^{m_j(\ell)}) \le 2^{-\ell}$.

By Lemma \ref{L3.3}, we can pick $\ell_j \ge j$ so that
\begin{equation}\label{3.6}
  Q_{\chi_\fre^{m_j(\ell_j)}}(\infty) \le Q_{\chi_j}(\infty)+ 2^{-j}
\end{equation}
Let $k(j) = m_j(\ell_j)$.  Since $d(\bdone,\chi_\fre^{k(j)}) \le d(\bdone,\chi_j)+2^{-j}$, we see that $\chi_\fre^{k(j)} \to \bdone$, so by Lemma  \ref{3.5}, $Q_{\chi_\fre^{k(j)}}(\infty) \to 1$.  By \eqref{3.6}, we conclude that $\liminf Q_{\chi_j}(\infty) \ge 1$.  Since $Q_{\chi_j}(\infty) \in (0,1]$, we conclude that the limit is $1$.
\end{proof}

\begin{proof} [Proof of Theorem \ref{T1.5}]  By the hypothesis, Lemma \ref{L3.6} applies, so we conclude that $\chi \mapsto Q_\chi(\infty)$ is continuous at $\bdone$. By Lemma \ref{L3.4}, $\chi \mapsto Q_\chi(\infty)$ is continuous on all of $\pi_1(\Omega)^*$, so, by Theorem \ref{T1.2}, the set $\fre$ is DCT.
\end{proof}

\section{Limit Points of $M_n$ are Blaschke Products} \lb{s4}

In this section and the next, we consider the functions $M_n(z) = \left[B_\fre(z)/B_n(z)\right]^n$ of \eqref{1.21}.  Since $\fre \subset \fre_n$, we have that $G_n(z) \le G_\fre(z)$ so
\begin{equation}\label{4.1}
  |M_n(z)| \le 1
\end{equation}
$M_n(z)$ is analytic on the universal cover of $(\bbC \cup \{\infty\})\setminus \fre_n$.  Since the harmonic measures of components of $\fre_n$ are $j/n$, $B_n(z)^n$ is single valued analytic on $\bbC \setminus \fre_n$, so $M_n(z)$ has character $\chi_n \equiv \chi_\fre^n$ for curves in $\wti{\Omega}$ that avoid $\fre_n$.

In this section, we'll prove that limit points of $M_n$ (after removing some removable potential singular points) are Blaschke products analytic on $\wti{\Omega}$ and, in the next, that these Blaschke products are dual Widom maximizers.  This section will only require that $\fre \subset \bbR$ is regular for potential theory and obeys a PW condition while the next will also require the DCT condition.

$\bbR \setminus \fre$ is a disjoint union of bounded open components (plus two unbounded components), $K \in \calG$.  We'll call these the gaps and $\calG$ the set of gaps.  A \emph{gap collection} is a subset $\calG_0 \subset \calG$.  A \emph{gap set} is a gap collection, $\calG_0$, and for each $K_k \in \calG_0$ a point $x_k \in K_k$.  For any gap $K = (\beta-\alpha,\beta+\alpha)$, we define
\begin{equation*}
  K^{(\epsilon)} = (\beta-(1-\epsilon)\alpha,\beta+(1-\epsilon)\alpha)
\end{equation*}
so that $K^{(\epsilon)} \subset K$ and $|K^{(\epsilon)}| = (1-\epsilon)|K|$.

For any gap set, $S$, we define the associated Blaschke product
\begin{equation}\label{4.2}
  B_S(z) = \prod_{K_k \in \calG_0} B_\fre(z,x_k)
\end{equation}
Lifted to $\bbD$, each $B_\fre(z,x_k)$ is a product of elementary Blaschke factors and thus, so is the product in \eqref{4.2}.  It is known (\cite[Theorem 9.9.4]{BCA}) that such products either converge to $0$ uniformly on compacts, or else converge to an analytic function vanishing only at the individual zeros and, in the latter case, the product has $\lim_{r \uparrow 1} |B_S(\x(re^{i\theta}))| = 1$ for a.e. $\theta$ (\cite[Theorem 5.3.1]{HA}).  Since $\sum_{K \in \calG} \sup_{y \in K} G_\fre(\infty,y) < \infty$ by the PW condition, we see that the product in \eqref{4.2} converges to a non-zero value at $z=\infty$.  Thus $B_S(z)$ is an analytic function on $\wti{\Omega}$ which vanishes exactly at points $w$ with $\pi(w) \in \{x_j\}_{K_j \in \calG_0}$.  Moreover, for a.e. point $y \in \fre$,

\begin{equation}\label{4.3a}
  \lim_{\epsilon \downarrow 0} |B_S(y+i\epsilon)| = 1
\end{equation}

Recall (\cite[(b) following Theorem 1.1]{CSZ1}) that any Chebyshev polynomial, $T_n$, has at most one zero in any gap $K \in \calG$.  Our main result in this section is

\begin{theorem} \lb{T4.1} Let $n_j \to \infty$ so that for some gap set, $S$, we have that if $K_k \in \calG_0$, then for large $j$, $T_{n_j}(z)$ has a zero $z_j^{(k)}$ in $K_k$ which converges to $x_k$ as $j \to \infty$ and so that for any $K \in \calG \setminus \calG_0$, and for all $\epsilon >0$, $T_{n_j}(z)$ has no zero in $K^{(\epsilon)}$ for all large $j$.  Then, as $j \to \infty$, $M_{n_j}(z) \to B_S(z)$ uniformly on compact subsets of $\wti{\Omega} \setminus \{w\,|\,\pi(w) \in \{x_k\}\}$.
\end{theorem}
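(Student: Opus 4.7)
The plan is to prove Theorem~\ref{T4.1} in three stages: extract a subsequential limit, compute $|f|$ via a potential-theoretic convergence, and pin down the phase by evaluating at $\infty$.

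\emph{Stage 1 (Normality).} Since $\fre\subset\fre_n$ gives $G_n\le G_\fre$, we have $|M_n(z)|=\exp(-nh_n(z))\le 1$ wherever $M_n$ is defined. The hypothesis forces $\pi^{-1}(\fre_{n_j}\setminus\fre)$ to approach $\pi^{-1}\{x_k\}\cup\pi^{-1}(\partial\fre)$, so on any compact $\calK\Subset\wti\Omega\setminus\pi^{-1}\{x_k\}$, $M_{n_j}$ is defined and bounded by $1$ for $j$ large. By Montel a subsequential limit $f$ exists; it suffices to prove every such limit equals $B_S$, since then the whole sequence converges.

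\emph{Stage 2 (Potential-theoretic limit).} The core claim is
\begin{equation*}
  n_j\bigl(G_\fre(z)-G_{n_j}(z)\bigr)\longrightarrow\sum_{K_k\in\calG_0}G_\fre(z,x_k)
\end{equation*}
uniformly on compacts of $\bbC\setminus(\fre\cup\{x_k\})$. I would prove this via the harmonic-measure representation $nh_n(z)=\sum_k\int_{B_{n,k}}nG_\fre(y)\,d\omega^{\Omega_n}_z(y)$, where $B_{n,k}$ are the bands of $\fre_n\setminus\fre$ and $\omega^{\Omega_n}_z$ is harmonic measure in $\Omega_n$. On each band inside $K_k\in\calG_0$, $G_\fre(y)\approx G_\fre(x_k)$; and the band length is $\sim\exp(-nG_\fre(x_k))$ (from $t_n=2C(\fre_n)^n$ together with $|T_n'(z_{n,k})|\sim\exp(nG_\fre(z_{n,k}))C(\fre)^n$ by equidistribution of zeros of $T_n$ with respect to $\rho_\fre$). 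Using the classical asymptotic $\omega^{\Omega_n}_z(B_{n,k})\sim G_{\Omega_n}(z,x_k)/\log(1/\mathrm{cap}(B_{n,k}))$ for small boundary arcs, the $k$-th term asymptotes to $G_\fre(z,x_k)$. Bands inside gaps $K\in\calG\setminus\calG_0$ lie in $K\setminus K^{(\epsilon)}$ where $G_\fre\to 0$, so contribute $o(1)$ pointwise; PW summability $\sum_K G_\fre(z_K)<\infty$ gives uniform control across possibly infinitely many such gaps.

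\emph{Stage 3 (Identifying $f$).} Stage 2 gives $|M_{n_j}|\to\exp(-\sum_k G_\fre(\cdot,x_k))=|B_S|$, so $|f|=|B_S|$ on compact subsets of $\wti\Omega\setminus\pi^{-1}\{x_k\}$. Since $|f|\to 0$ approaching each preimage of $x_k$ and $f$ is bounded, Riemann removability extends $f$ analytically across $\pi^{-1}\{x_k\}$ to all of $\wti\Omega$, with zeros matching $B_S$ in order. Then $f/B_S$ is analytic on $\wti\Omega$ with $|f/B_S|\equiv 1$, so by the maximum modulus principle $f/B_S\equiv c$ with $|c|=1$. Evaluating at $\infty$: $M_n(\infty)=(C(\fre)/C(\fre_n))^n$, and Stage 2 at $z=\infty$ (using $G_\fre(\infty,x_k)=G_\fre(x_k)$) yields $M_{n_j}(\infty)\to\exp(-\sum_k G_\fre(x_k))=B_S(\infty)>0$, forcing $c=1$.

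\emph{Main obstacle.} Stage 2 is the delicate step. The harmonic-measure analysis for bands in $\calG_0$ requires precise band-length and equilibrium-measure asymptotics that rely on the PW hypothesis. More seriously, one must control the contribution of potentially infinitely many bands in $\calG\setminus\calG_0$ uniformly; this is where PW summability and the hypothesis that $T_{n_j}$ has no zero in $K^{(\epsilon)}$ for $K\in\calG\setminus\calG_0$ combine, effectively a dominated-convergence argument on the gap sum.
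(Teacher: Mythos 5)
Your overall architecture (normal families, convergence of $\log|M_n|$, then pinning the phase at $\infty$) matches the paper's, and Stages 1 and 3 are sound. Stage 3 uses a unimodular-constant argument where the paper uses a harmonic-conjugate/Vitali lemma (Proposition~\ref{P4.2}); both work. The genuine divergence is Stage 2, and there the paper takes a cleaner path than the one you sketch. The paper's key identity (Proposition~\ref{P4.3}) is
\begin{equation*}
  h_n(z)=\int_{\bigcup_{K_j\in\calG}K_j}G_\fre(x,z)\,d\rho_n(x),
\end{equation*}
where $d\rho_n$ is the equilibrium measure of $\fre_n$ (proved by matching distributional Laplacians and boundary values). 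The crucial exact fact is that each band of $\fre_n$ carries $\rho_n$-mass exactly $1/n$, so $n\,\rho_n\!\restriction K$ is a sub-probability measure and $n\rho_{n_j}\!\restriction K\to\delta_{x_\infty}$ (or $\to 0$) weakly as the band collapses, after which PW summability yields uniformity across the gaps. No band-length asymptotics, no capacity estimates, and no small-arc harmonic-measure asymptotics are required.

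Your route via $h_n(z)=\int_{\fre_n\setminus\fre}G_\fre(y)\,d\omega^{\Omega_n}_z(y)$ is also a valid identity, but it forces you to estimate $\omega^{\Omega_n}_z(B_{n,k})$ from band-length and capacity asymptotics, and these require care. Two concrete issues: (i) your small-arc formula
$\omega^{\Omega_n}_z(B_{n,k})\sim G_{\Omega_n}(z,x_k)/\log(1/\mathrm{cap}(B_{n,k}))$
does not quite make sense as written, since $x_k$ (and the band around it) sits in $\fre_n$, not in $\Omega_n$; one must use the Green's function of the domain with that band filled in (essentially $G_\fre(z,x_k)$), and justify the convergence of the filled-in Green's function. (ii) Your band-length estimate relies on $|T_n'(z_{n,k})|\sim C(\fre)^n\exp(nG_\fre(z_{n,k}))$ from equidistribution of zeros; this is a first-order heuristic and one would have to control its multiplicative error uniformly across possibly infinitely many gaps to make the final limit come out exactly right. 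The equilibrium-measure route sidesteps both points because the $1/n$ mass per band is exact, not asymptotic, and because $d\rho_n$ is a single measure independent of $z$, which makes the PW-dominated uniformity immediate.

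In short: your plan is coherent and would likely succeed, but Stage 2 as proposed requires several delicate asymptotic lemmas the paper does not need, and the small-arc harmonic-measure formula needs to be restated before it is even well-posed.
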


\begin{remarks} 1. The points $w$ with $\pi(w)=x_k$ for some $k$ are removable singular points for $B_S$.  In fact, it is easy to see that while $M_{n_j}(x_k+i0)$ and $M_{n_j}(x_k-i0)$ may be different, both values converge to $0$, so, in a certain sense, one has convergence on all of $\wti{\Omega}$.

2.  By Montel's Theorem and \eqref{4.1}, the functions $M_n$ lie in a compact set in the Fr\'echet topology of uniform convergence on compact subsets.  We can therefore make multiple demands and one might guess that, as in \cite{CSZ1}, we want to also demand that $\chi_{n_j}$ has a limit as does $[C(\fre_{n_j})/C(\fre)]^{n_j}$ and the $M_{n_j}$.  It turns out that the single condition on the limits of zeros will automatically imply these other objects converge.
\end{remarks}

We will prove this result by controlling convergence for $z$ near $\infty$ using

\begin{proposition} \lb{P4.2} Let $\Upsilon$ be a Riemann surface and $U_n$ open sets so that for any compact set $K \subset \Upsilon$, eventually, $K \subset U_n$.  Let $f_n$ be analytic functions on $U_n$ so that
\begin{equation}\label{4.3}
  \sup_n \sup_{z \in U_n} |f_n(z)| < \infty
\end{equation}
Let $f_\infty$ be analytic on $\Upsilon$ so that for some $z_0 \in \Upsilon$ and some neighborhood, $V$, of $z_0$, we have that
\begin{equation}\label{4.4}
  \lim_{n \to \infty} |f_n(z)| = |f_\infty(z)| \textrm{ for all } z \in V
\end{equation}
\begin{equation}\label{4.5}
  f_n(z_0) > 0, \qquad f_\infty(z_0) > 0
\end{equation}
\begin{equation}\label{4.6}
  z \in V \Rightarrow \forall n: f_n(z) \ne 0 \textrm{ and } f_\infty(z) \ne 0
\end{equation}
Then $f_n \to f$ uniformly on compact subsets of $\Upsilon$.
\end{proposition}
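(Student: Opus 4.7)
The strategy is a normal-families argument combined with the fact that a non-vanishing analytic function is determined up to a unimodular constant by its modulus. I first apply Montel's theorem to the family $\{f_n\}$ on $\Upsilon$: using the exhaustion hypothesis on the $U_n$ together with the uniform bound \eqref{4.3}, every subsequence of $\{f_n\}$ (viewed as functions on any fixed compact $K\subset\Upsilon$, where eventually $K\subset U_n$) admits a further subsequence converging uniformly on compact subsets of $\Upsilon$ to some analytic limit $g$. A diagonal argument against an exhaustion of $\Upsilon$ by compacts gives such a limit on all of $\Upsilon$. It therefore suffices to show that every such subsequential limit coincides with $f_\infty$, since then uniform convergence of the whole sequence on compacts follows by the usual topological argument.

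So suppose $f_{n_k}\to g$ uniformly on compact subsets of $\Upsilon$. On the distinguished neighborhood $V$, the pointwise modulus hypothesis \eqref{4.4} immediately yields $|g|=|f_\infty|$. By \eqref{4.6} the functions $f_n$ are zero-free on $V$, so Hurwitz's theorem implies that $g$ is either identically zero on the connected component of $V$ containing $z_0$ or nowhere zero on $V$; since $|g(z_0)|=|f_\infty(z_0)|>0$ by \eqref{4.5}, the latter alternative holds. Consequently $g/f_\infty$ is a well-defined non-vanishing analytic function on $V$ with constant modulus $1$, and the open mapping theorem (equivalently, the maximum modulus principle) forces $g/f_\infty\equiv c$ on $V$ for some constant $c\in\bbC$ with $|c|=1$.

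To pin down $c$, I evaluate at $z_0$. From $f_n(z_0)>0$ together with $|f_n(z_0)|\to|f_\infty(z_0)|$ one gets $f_{n_k}(z_0)\to f_\infty(z_0)>0$, hence $g(z_0)=f_\infty(z_0)$, and therefore $c=1$. Thus $g\equiv f_\infty$ on $V$, and since $\Upsilon$ is connected and both $g$ and $f_\infty$ are analytic on all of $\Upsilon$, the identity theorem gives $g\equiv f_\infty$ everywhere.

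The only real obstacle is the passage from convergence of moduli to convergence of the functions themselves. The positivity normalization \eqref{4.5} together with the non-vanishing hypothesis \eqref{4.6} is precisely what is needed to rule out different subsequences picking out different unimodular phases in the limit; everything else is a routine normal-families plus identity-theorem argument.
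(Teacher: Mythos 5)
Your proof is correct, and it takes a genuinely different route from the paper's. Where you first extract a subsequential limit $g$ via Montel, observe $|g|=|f_\infty|$ on $V$, appeal to Hurwitz to get $g$ zero-free on $V$, and then use the fact that an analytic function of constant modulus on a connected open set is a unimodular constant, pinning the constant to $1$ by the positivity at $z_0$ and finishing with the identity theorem, the paper instead argues directly on $V$: after shrinking $V$ to a simply connected set on which $f_n$ and $f_\infty$ are zero-free, it sets $g_n=\log f_n$ with $\operatorname{Im} g_n(z_0)=0$, notes that $\operatorname{Re}g_n=\log|f_n|\to\operatorname{Re}g_\infty$ on $V$, uses the Cauchy--Riemann equations to propagate this to $\nabla(\operatorname{Im}g_n)\to\nabla(\operatorname{Im}g_\infty)$ and hence (with the normalization) $\operatorname{Im}g_n\to\operatorname{Im}g_\infty$, so $f_n\to f_\infty$ on $V$, and then invokes Vitali's theorem together with \eqref{4.3} to spread the convergence to all of $\Upsilon$. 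The two proofs are close cousins: your final step (Montel on $\Upsilon$ plus uniqueness of the limit) is essentially a proof of Vitali's theorem, so the real divergence is in how convergence of moduli is upgraded to convergence of functions on $V$. Your ``constant-modulus quotient'' argument is softer and avoids logarithms and the implicit harmonic-function estimates lurking behind the Cauchy--Riemann step; the paper's argument is more direct in that it never passes to subsequences on $V$. Either way, the conclusion and its use in the rest of the section are the same. One tiny remark: you should say explicitly that you may take $V$ to be connected (replace it by the component containing $z_0$) before applying the open-mapping/maximum-modulus argument to $g/f_\infty$; the statement only says ``neighborhood,'' just as the paper shrinks $V$ to be simply connected with compact closure before taking logarithms.
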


\begin{proof} By shrinking $V$, we can suppose that it is simply connected and $\overline{V}$ is compact.  By \eqref{4.5}/\eqref{4.6}, we can define $g_n(z) = \log f_n(z)$ uniquely if we demand that
\begin{equation}\label{4.7}
  \textrm{Im} g_n(z_0) = 0
\end{equation}
By \eqref{4.4}, $\textrm{Re}g_n \to \textrm{Re}g_\infty$ on $V$ so by the Cauchy--Riemann equations, $\nabla(\textrm{Im}g_n) \to \nabla(\textrm{Im}g_\infty)$.  By \eqref{4.7}, $\textrm{Im}g_n \to \textrm{Im}g_\infty$, so $f_n \to f_\infty$ on $V$.  By Vitali's Theorem (\cite[Section 6.2]{BCA}) and \eqref{4.3}, $f_n \to f_\infty$ uniformly on compacts.
\end{proof}

Thus instead of $M_n(z)$, we can look at
\begin{equation}\label{4.8}
  |M_n(z)| = \exp(-nh_n(z)), \qquad h_n(z) = G_\fre(z)-G_n(z)
\end{equation}
Let $d\rho_n$ be the potential theoretic equilibrium measure of $\fre_n$ (see \cite[Section 3.6--3.7]{HA} for background on potential theory).  Then

\begin{proposition} \lb{P4.3} One has that
\begin{equation}\label{4.9}
  h_n(z) = \int_{\bigcup_{K_j \in \calG} K_j} G_\fre(x,z) d\rho_n(x)
\end{equation}
\end{proposition}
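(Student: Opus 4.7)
The plan is to reduce the identity to two standard ingredients from potential theory: the logarithmic-potential representation of $G_n$, and a Dirichlet-type decomposition of the two-point Green's function $G_\fre(z,x)$ with pole in the complement of $\fre$.

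First, I would establish the identity
\begin{equation*}
  G_\fre(z,x) = G_\fre(z) - \log|z-x| + \int \log|w-x|\,d\omega_\fre(w,z),
\end{equation*}
valid for $z,x \in \Omega$ with $z \neq x$, where $\omega_\fre(\cdot, z)$ is the harmonic measure of $\Omega = (\bbC\cup\{\infty\})\setminus \fre$ from $z$ (so that $\omega_\fre(\cdot,\infty)=d\rho_\fre$). The justification: both sides carry the same $-\log|z-x|$ singularity at $z=x$; both are otherwise harmonic in $z$ on $\Omega$; both vanish as $z \to \fre$ (the left side by the defining property of $G_\fre(\cdot,x)$, the right side because $G_\fre$ vanishes on $\fre$ and the boundary value $\log|w-x|$ of the integral term cancels $-\log|z-x|$); and both behave correctly at $\infty$. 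Uniqueness of the Dirichlet solution forces equality.

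Next, I would integrate this against $d\rho_n(x)$ and apply Fubini. The defining relation $G_n(z) = -\log C(\fre_n) + \int \log|z-x|\,d\rho_n(x)$ converts the middle piece into $-G_n(z) - \log C(\fre_n)$. The iterated integral produces
\begin{equation*}
  \int \Bigl[\int \log|w-x|\,d\rho_n(x)\Bigr]\,d\omega_\fre(w,z) = \int \bigl[G_n(w) + \log C(\fre_n)\bigr]\,d\omega_\fre(w,z);
\end{equation*}
since $\omega_\fre(\cdot,z)$ is a probability measure supported on $\fre \subset \fre_n$ and $G_n \equiv 0$ on $\fre_n$ by regularity, this collapses to $\log C(\fre_n)$. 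The two $\log C(\fre_n)$ terms cancel, and we arrive at $\int G_\fre(z,x)\,d\rho_n(x) = G_\fre(z) - G_n(z) = h_n(z)$.

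Finally, to rewrite the integration domain as $\bigcup_{K_j\in\calG} K_j$ rather than $\fre_n$, I invoke the symmetry $G_\fre(x,z) = G_\fre(z,x)$ and the vanishing $G_\fre(\cdot,z)\equiv 0$ on $\fre$; thus the $\fre$-part of $\fre_n$ contributes nothing to the integral, and the support of the relevant mass is $\fre_n\setminus\fre \subset \bigcup_{K_j\in\calG} K_j$. The only minor technical nuisance is verifying Fubini for the iterated $\log$-integral, which is routine: $\fre_n$ is a finite union of real intervals, so $\rho_n$ is absolutely continuous with at worst inverse-square-root endpoint singularities, all measures are compactly supported probability measures, and $\log|\cdot|$ is locally integrable. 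The conceptual heart of the argument is the decomposition formula for $G_\fre(z,x)$; once that is in hand, what remains is bookkeeping with the equilibrium potential and harmonic measure on $\fre$.
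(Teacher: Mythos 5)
Your route is correct but genuinely different from the paper's. The paper's proof is a two-sentence maximum-principle argument: both sides of \eqref{4.9} are continuous on $\bbC\cup\{\infty\}$, vanish on $\fre$, and have the same distributional Laplacian off $\fre$ (a multiple of $d\rho_n\restriction(\fre_n\setminus\fre)$, arising on the right from the $-\log|z-x|$ pole of $G_\fre(x,\cdot)$, and on the left from $\Delta G_n = 2\pi\,d\rho_n$ while $G_\fre$ is harmonic off $\fre$); the difference is therefore harmonic on $\Omega$, continuous, zero on $\fre$, bounded near $\infty$, hence identically zero. Your approach---write the two-pole Green's function as $G_\fre(z)-\log|z-x|+\int\log|w-x|\,d\omega_\fre(w,z)$, integrate against $d\rho_n$, and Fubini---is more computational and makes the cancellations visible; the paper's is shorter and bypasses harmonic measure and Fubini entirely. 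Both are legitimate.

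There is, however, one point you must repair. You assert the decomposition for $z,x\in\Omega$, yet $\rho_n$ puts positive mass on $\fre\subset\fre_n$, and your Fubini step integrates the decomposition over \emph{all} of $\fre_n$---this is essential, since the coefficient of $G_\fre(z)$ must be $\rho_n(\fre_n)=1$, not $\rho_n(\fre_n\setminus\fre)<1$, for the $\log C(\fre_n)$ terms to cancel and for $G_n$ to appear cleanly. So you need the decomposition also for $x\in\fre$, where both sides should be interpreted as $0$. This is true but needs a sentence: for a regular boundary point $x\in\fre$, the function $v(z)=\log|z-x|-\int\log|w-x|\,d\omega_\fre(w,z)$ is harmonic on $\Omega$, tends to $0$ on $\fre\setminus\{x\}$, and is $\log|z|+O(1)$ near $\infty$, hence $v=G_\fre$ (the single exceptional boundary point is polar, so the maximum-principle identification survives); equivalently, one should also verify $\log|\cdot-x|\in L^1(\omega_\fre(\cdot,z))$ for $x\in\fre$, which holds for regular $\fre$. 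State this extension explicitly, or simply adopt the paper's Laplacian argument, which sidesteps the issue.
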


\begin{remark} In \cite{CSZ1}, we proved the Totik--Widom bound \eqref{1.10} for PW sets, $\fre \subset \bbR$, by using this when $z=\infty$, i.e.,
\begin{equation*}
  h_n(\infty) = \int_{\bigcup_{K_j \in \calG} K_j} G_\fre(x) d\rho_n(x)
\end{equation*}
We proved this by thinking of $d\rho_n$ as harmonic measure at $\infty$, i.e., if $H$ is harmonic on $(\bbC \cup \{\infty\}) \setminus \fre_n$ with boundary values $H(x)$ on $\fre_n$, then
\begin{equation*}
  H(\infty) = \int_{\fre_n} H(x) d\rho_n(x)
\end{equation*}
If we wrote the analog of this for general $z$, we'd get
\begin{equation*}
   H(z) = \int_{\fre_n} H(x) d\rho_n(x,z)
\end{equation*}
varying the harmonic measure.  Instead we think of \eqref{4.9} with $G_\fre$ arising as the Green's function for solving Poisson's equation with zero boundary values on $\fre$ and $d\rho_n$ occurs as the Laplacian of $G_n$.
\end{remark}

\begin{proof} Both sides of \eqref{4.9} are continuous functions of $z \in \bbC \cup \{\infty\}$ (by regularity of $\fre$ and $\fre_n$) and both sides vanish on $\fre$.  Off $\fre$, they have the same distributional Laplacian, namely $d\rho_n \restriction (\fre_n \setminus \fre)$.  Thus the difference is harmonic on $(\bbC \cup \{\infty\}) \setminus \fre$, continuous on $\bbC \cup \{\infty\}$, vanishing on $\fre$ and bounded near $\infty$.  The boundedness means the difference is also harmonic at $\infty$ (\cite[Theorem 3.1.26]{HA}) and then the maximum principle implies that the difference is $0$.
\end{proof}

The final step in the proof of Theorem \ref{T4.1} involves the form as $n \to \infty$ of $d\rho_n \restriction K$ for $K \in \calG$. Recall that $\fre_n$ is a union of $n$ bands which are closures of the connected components of $T_n^{-1}[(-t_n,t_n)]$.  On each of these, as $x$ increases, $T_n$ is either strictly monotone increasing or strictly decreasing from $-t_n$ to $t_n$ or vice-versa. Recall also that each of the bands has $\rho_n$ measure exactly $1/n$ (see \cite[Thm. 2.3]{CSZ1}).  In \cite{CSZ1}, it is proven that each gap, $K$, contains all or part of a single band so that
\begin{equation}\label{4.10}
  n \rho_n(K) \le 1
\end{equation}
If there is $x_\infty \in K$ which is a limit as $j \to \infty$ of zeros, $x_{n_j}$ of $T_{n_j}$, then for $j$ large, $\fre_{n_j} \cap K$ is a complete band of exponentially small width so, in that case
\begin{equation}\label{4.11}
    n_j \rho_{n_j} \restriction K \to \delta_{x_\infty}
\end{equation}
weakly.  If for each $\epsilon$, there is a large $J_\epsilon$ so if $j \ge J_\epsilon$, then $T_{n_j}$ has no zero in $K^{(\epsilon)}$, then for all sufficiently large $j$, $\rho_{n_j}(K^{(\epsilon)}) = 0$.  Since $G_\fre$ vanishes at the edges of $K$ (and so $\sup_{x \in K \setminus K^{(\epsilon)}} G_\fre(x,z) \to 0$ as $\epsilon \downarrow 0$ uniformly as $z$ runs through compact sets), we conclude that
\begin{equation}\label{4.11A}
  n\int_K G_\fre(x,z) d\rho_n(x) \to \left\{
                                       \begin{array}{ll}
                                         G_\fre(x_\infty,z), & \textrm{if } K \in \calG_0 \\
                                         0, & \textrm{if } K \notin \calG_0
                                       \end{array}
                                     \right.
\end{equation}

By the PW condition, $\sum_{K \in \calG} \sup_{y \in K} G_\fre(z,y) < \infty$ uniformly in $z$ on compacts, we can go from pointwise limits in \eqref{4.11} to limits on sums.  We conclude that:

\begin{proposition} \lb{P4.4}  Under the hypotheses of Theorem \ref{T4.1}, uniformly for $z$ in compact subsets of $\Omega\setminus\{x_k\}_{K_k \in \calG_0}$, we have that
\begin{equation}\label{4.12}
  n\int_{\bigcup_{K_k \in \calG} K_k} G_\fre(x,z) d\rho_n(x) \to \sum_{K_k \in \calG_0} G_\fre(x_k,z)
\end{equation}
\end{proposition}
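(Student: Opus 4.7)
The plan is to prove \eqref{4.12} by a dominated-convergence argument applied to the sum over gaps. First, split
\begin{equation*}
n \int_{\bigcup_{K \in \calG} K} G_\fre(x,z)\, d\rho_n(x) = \sum_{K \in \calG} n \int_K G_\fre(x,z)\, d\rho_n(x),
\end{equation*}
and note that for each individual $K \in \calG$ the uniform-on-compacts limit as $n_j \to \infty$ is already supplied by \eqref{4.11A}: the integral tends to $G_\fre(x_k,z)$ if $K = K_k \in \calG_0$, and to $0$ otherwise. The content of the proposition is therefore reduced to exchanging this limit with the (possibly infinite) sum over $K$.

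To justify the exchange, I would apply dominated convergence in counting measure on $\calG$. The band-mass inequality \eqref{4.10}, $n\rho_n(K) \le 1$, yields the termwise bound
\begin{equation*}
n \int_K G_\fre(x,z)\, d\rho_n(x) \le \sup_{x \in K} G_\fre(x,z),
\end{equation*}
so it suffices to establish that $\sum_{K \in \calG} \sup_{x \in K} G_\fre(x,z)$ is finite, with a bound uniform in $z$ over any compact $K_0 \subset \Omega \setminus \{x_k\}_{K_k \in \calG_0}$. At $z = \infty$, this summability is exactly the PW condition, $\sum_K \sup_{x \in K} G_\fre(x,\infty) = PW(\fre) < \infty$. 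To upgrade from $\infty$ to a general $z \in K_0$, I would use a Harnack-type comparison: for any gap $K$ disjoint from a fixed neighborhood of $K_0$, both $G_\fre(\cdot, z)$ and $G_\fre(\cdot, \infty)$ are positive harmonic on a neighborhood of $\overline{K}$ and vanish at the endpoints of $K$ (which lie in $\fre$), so the ratio $G_\fre(x,z)/G_\fre(x, \infty)$ is bounded uniformly in $x \in K$ by a constant depending only on $\dist(K, K_0)$. Only finitely many gaps can meet any fixed compact neighborhood of $K_0$ (since gaps accumulate only on $\fre$, and $K_0$ sits at positive distance from $\fre$), and each of those contributes a bounded amount by continuity of $G_\fre(\cdot, z)$ on $\overline{K}$ for $z \in K_0$ bounded away from $\{x_k\}$.

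With this uniform summable majorant in hand, dominated convergence combined with the per-gap limit \eqref{4.11A} delivers \eqref{4.12} uniformly on compact subsets of $\Omega \setminus \{x_k\}_{K_k \in \calG_0}$. The main technical obstacle is the boundary-Harnack-type comparison across the (possibly infinite) family of gaps, which must hold uniformly even though gaps can accumulate and have arbitrarily small length; the symmetry $G_\fre(x,z) = G_\fre(z,x)$ and the standard maximum-principle control of ratios of positive harmonic functions vanishing on the same boundary set should make this tractable.
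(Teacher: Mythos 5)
Your argument is correct and follows essentially the same route as the paper: per-gap limits from \eqref{4.11A} are summed via dominated convergence in counting measure, with the termwise majorant $\sup_{x\in K}G_\fre(x,z)$ coming from \eqref{4.10} and the PW condition supplying the summable bound uniformly on compacts. The paper simply asserts that uniform summability; your Harnack step fills it in, and the cleaner way to run it is exactly the one you hint at via $G_\fre(x,z)=G_\fre(z,x)$ --- an interior Harnack chain in the $z$-variable from $z\in K_0$ to $\infty$ inside $\Omega\setminus\{x\}$, which sidesteps any boundary-Harnack delicacy at thin gaps.
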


\begin{proof} [Proof of Theorem \ref{4.1}] By \eqref{4.8}, \eqref{4.9} and \eqref{4.12},
\begin{equation}\label{4.13}
  \lim_{n_j \to \infty} |M_{n_j}(z)| = \prod_{K_k \in \calG_0} |B_\fre(z,x_k)| = |B_S(z)|
\end{equation}
That $M_{n_j} \to B_S$ then follows from Proposition \ref{P4.2}.
\end{proof}

\section{Blaschke Products are Dual Widom Maximizers} \lb{s5}

Given the setup of Theorem \ref{T4.1}, the function $B_S(z)$ is character automorphic with some character $\beta$.  In this section, we'll prove that $B_S$ is a dual Widom maximizer for character $\beta$.  One can deduce this from results of Volberg--Yuditskii \cite[Lemma 6.4]{VY}.  Instead, we'll follow an approach of Eichinger--Yuditskii \cite{EichY} (who study an Ahlfors problem rather than a dual Widom problem) that relies on results of Sodin--Yuditskii \cite{SY}.

A basic technique of Sodin--Yuditskii is to consider the space, $\calH_\alpha$, of all functions on $\wti{\Omega}$ which are in $H^2(\bbD)$ when moved to $\bbD$ and which are character automorphic with character $\alpha \in \pi_1(\Omega)^*$.  $\calH_\alpha$ is a family of functions on $\wti{\Omega}$ which is a reproducing kernel Hilbert space (\cite[Problems 4--11 of Section 3.3]{RA}) under the inner product of $H^2$.  In particular, there is a function $K^\alpha \in \calH_\alpha$ so that for all $f \in \calH_\alpha$
\begin{equation}\label{5.1}
  f(\infty) = \jap{K^\alpha,f}
\end{equation}
Note: Our inner products are linear in the second factor and anti-linear in the first as in \cite{RA}.

We will prove

\begin{theorem} \lb{T5.1} For any gap set, $S$, if $B_S$ is the associated Blaschke product and $\beta$ its character, then $B_S$ is a dual Widom maximizer for $\beta$, i.e.,
\begin{equation}\label{5.2}
  \norm{B_S}_\infty = 1
\end{equation}
and if $f \in H^\infty(\Omega,\beta)$ with $\norm{f}_\infty \le 1$, then
\begin{equation}\label{5.3}
     |f(\infty)| \le B_S(\infty)
\end{equation}
\end{theorem}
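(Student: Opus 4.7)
The statement \eqref{5.2} is bookkeeping carried over from Section \ref{s4}: each factor $B_\fre(\cdot,x_k)$, lifted via $\x$ to $\bbD$, is a Blaschke product of modulus at most $1$, so $|B_S|\le 1$ on $\wti\Omega$; convergence of \eqref{4.2} to a non-zero analytic function is ensured by the PW bound $\sum_k G_\fre(\infty,x_k)<\infty$, and the almost-everywhere boundary limit \eqref{4.3a} forces $\|B_S\|_\infty = 1$.

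For the maximality \eqref{5.3} I would use the reproducing-kernel framework just introduced. Given $f\in H^\infty(\Omega,\beta)$ with $\|f\|_\infty\le 1$ and any character $\gamma\in\pi_1(\Omega)^*$, multiplication by $f$ is an $H^2$-contraction from $\calH_\gamma$ into $\calH_{\beta\gamma}$, so \eqref{5.1} together with Cauchy--Schwarz gives
\ba
|f(\infty)|\,|h(\infty)| = |\jap{K^{\beta\gamma},\,fh}| \le \|K^{\beta\gamma}\|\,\|fh\|_2 \le \|K^{\beta\gamma}\|\,\|h\|_2
\ea
for every $h\in\calH_\gamma$. The ratio $\|h\|_2/|h(\infty)|$ is minimized at $h=K^\gamma$ with value $1/\|K^\gamma\|$, so this yields the character-indexed bound
\ba
|f(\infty)|\le \frac{\|K^{\beta\gamma}\|}{\|K^\gamma\|}.
\ea

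To close I need a distinguished $\gamma_*$ at which the right side equals $B_S(\infty)$. Drawing on the Sodin--Yuditskii analysis of character-automorphic reproducing kernels, I would produce such a $\gamma_*$ together with the multiplicative identity
\ba
B_S\,K^{\gamma_*} = c\,K^{\beta\gamma_*},\qquad c>0,
\ea
in $\calH_{\beta\gamma_*}$. Granted this, \eqref{5.3} is immediate: because $|B_S|=1$ a.e.\ on $\partial\Omega$, multiplication by $B_S$ is an $L^2$-isometry, so taking $H^2$-norms yields $c\,\|K^{\beta\gamma_*}\| = \|K^{\gamma_*}\|$, while evaluating at $\infty$ yields $c\,\|K^{\beta\gamma_*}\|^2 = B_S(\infty)\,\|K^{\gamma_*}\|^2$; dividing gives $\|K^{\beta\gamma_*}\|/\|K^{\gamma_*}\| = B_S(\infty)$.

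The main obstacle is exactly the reproducing-kernel identity. Equivalently, one must choose $\gamma_*$ so that $K^{\beta\gamma_*}$ vanishes at every zero $x_k$ of $B_S$; once that is arranged, $B_S\,\calH_{\gamma_*}$ coincides with the subspace of $\calH_{\beta\gamma_*}$ of functions vanishing at all $x_k$, and the identity is forced up to a scalar. For a finite gap set this is a finite-dimensional constraint on $\gamma_*$; for a general gap set it requires the completeness and divisibility technology developed in Sodin--Yuditskii \cite{SY} and adapted to the extremal setting by Eichinger--Yuditskii \cite{EichY}.
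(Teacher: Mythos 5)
Your proposal is correct and follows essentially the same route as the paper: the same reproducing-kernel/Cauchy--Schwarz bound $|f(\infty)|\le \|K^{\beta\gamma}\|/\|K^\gamma\|$ for every $\gamma$, and the same saturating identity $B_S K^{\gamma_*}=cK^{\beta\gamma_*}$ obtained from the Sodin--Yuditskii Abel-map divisibility theory. The paper's Lemma \ref{L5.3} carries out the step you describe as ``the main obstacle'' by taking $\alpha_1=\frA(y,\sigma)$ with data $(y_{K_k},\sigma_{K_k})=(x_k,+)$ on $\calG_0$ and setting $\gamma_*=\alpha_0=\alpha_1\beta^{-1}$, exactly as you anticipate.
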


\eqref{5.2} is, of course, true for any (convergent) Blaschke product.  We prove \eqref{5.3} by proving two facts:

(1) For any character, $\gamma$, and $f \in H^\infty(\Omega,\beta)$ with $\norm{f}_\infty \le 1$, one has that
\begin{equation}\label{5.4}
  |f(\infty)|^2 \le \frac{K^{\gamma\beta}(\infty)}{K^\gamma(\infty)}
\end{equation}

(2) There exists at least one $\alpha_0$ with
\begin{equation}\label{5.5}
  |B_S(\infty)|^2 = \frac{K^{\alpha_0\beta}(\infty)}{K^{\alpha_0}(\infty)}
\end{equation}

\begin{lemma} \lb{L5.2} \eqref{5.4} holds.
\end{lemma}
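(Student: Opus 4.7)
The plan is to use the reproducing kernel structure of $\calH_\alpha$ together with the fact that multiplication by a bounded $\beta$-automorphic function of sup-norm at most one is a contraction between the appropriate Hilbert spaces. Given $f \in H^\infty(\Omega,\beta)$ with $\|f\|_\infty \le 1$ and any character $\gamma$, I first claim that the map $M_f : \calH_\gamma \to \calH_{\gamma\beta}$ defined by $M_f(g) = fg$ is well-defined and contractive. Indeed, $fg$ is character automorphic with character $\gamma\beta$, and since $|fg|^2 \le \|f\|_\infty^2 |g|^2 \le |g|^2$, the function $|fg|^2$ inherits a harmonic majorant from $|g|^2$, so $fg \in \calH_{\gamma\beta}$; the same bound gives $\|fg\|_2 \le \|g\|_2$.

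The second step is to test this contraction on the reproducing kernel itself. Take $g = K^\gamma$, so that $fK^\gamma \in \calH_{\gamma\beta}$ with $\|fK^\gamma\|_2 \le \|K^\gamma\|_2 = \sqrt{K^\gamma(\infty)}$, where the last equality is the standard identity $\|K^\alpha\|_2^2 = \langle K^\alpha, K^\alpha\rangle = K^\alpha(\infty)$ coming from \eqref{5.1}. Applying the reproducing property of $K^{\gamma\beta}$ to the function $fK^\gamma$ gives
\begin{equation*}
f(\infty)\, K^\gamma(\infty) = (fK^\gamma)(\infty) = \jap{K^{\gamma\beta}, fK^\gamma}.
\end{equation*}

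The final step is Cauchy--Schwarz:
\begin{equation*}
|f(\infty)|\, K^\gamma(\infty) = \bigl|\jap{K^{\gamma\beta}, fK^\gamma}\bigr| \le \|K^{\gamma\beta}\|_2\,\|fK^\gamma\|_2 \le \sqrt{K^{\gamma\beta}(\infty)}\,\sqrt{K^\gamma(\infty)}.
\end{equation*}
Dividing by $K^\gamma(\infty)$ (which is strictly positive since $1 \in \calH_{\bdone}$ embeds to give nonzero kernel values, and more generally the PW hypothesis guarantees non-triviality of $\calH_\gamma$) and squaring yields \eqref{5.4}.

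I do not anticipate a serious obstacle here; the only point that needs care is the assertion that $M_f$ is a contraction between the two $H^2$ spaces, which reduces after lifting to $\bbD$ to the familiar fact that an $H^\infty$ function acts on $H^2$ by multiplication with operator norm equal to its sup-norm. Everything else is the standard reproducing-kernel / Cauchy--Schwarz trick, which is exactly the mechanism Eichinger--Yuditskii \cite{EichY} exploit in the Ahlfors setting.
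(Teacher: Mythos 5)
Your proof is correct and follows exactly the same route as the paper: write $f(\infty)K^\gamma(\infty)=\jap{K^{\gamma\beta},fK^\gamma}$ via the reproducing property, apply Cauchy--Schwarz, use $\norm{fK^\gamma}_2\le\norm{K^\gamma}_2$ since $\norm{f}_\infty\le1$, and evaluate $\norm{K^\alpha}_2^2=K^\alpha(\infty)$. The only cosmetic difference is that you package the contraction as an operator $M_f$ before applying it, whereas the paper inlines the estimate.
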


\begin{proof}  Since $f \in H^\infty(\Omega,\beta)$ and $K^\gamma \in \calH_\gamma$, we have that $fK^\gamma \in \calH_{\gamma\beta}$.  Thus
\begin{align}
  |f(\infty)K^\gamma(\infty)|^2 &= |\jap{K^{\gamma\beta},fK^\gamma}|^2 \nonumber \\
                                &\le \norm{fK^\gamma}_2^2 \norm{K^{\gamma\beta}}_2^2 \lb{5.6} \\
                                &\le \norm{K^\gamma}_2^2 \norm{K^{\gamma\beta}}_2^2 \lb{5.7} \\
                                &= \jap{K^\gamma,K^\gamma} \jap{K^{\gamma\beta},K^{\gamma\beta}} \nonumber \\
                                &= K^\gamma(\infty) K^{\gamma\beta}(\infty) \lb{5.8}
\end{align}
which is \eqref{5.4} since $K^\gamma(\infty)>0$.  In the above, \eqref{5.6} is the Schwarz inequality, \eqref{5.7} uses $\norm{f}_\infty \le 1$ and \eqref{5.8} is \eqref{5.1}.
\end{proof}

For step 2, we need a deep result of Sodin--Yuditskii.  For each gap $K \in \calG$, we define $C_K$ to be two copies glued together at the ends, i.e., we take two copies $\{(y,+),(y,-) \, |\,y \in \overline{K}\}$ and for $y \in \partial K$ (two points), we set $(y,+)=(y,-)$ so $C_K$ is topologically a circle.  According to Sodin--Yuditskii \cite{SY}, there is a map, $\frA$, the Abel map, from $\prod_{K \in \calG} C_K$ to the character group, so that, in particular, the inner part of $K^{\frA(y,\sigma)}$ is $B_S$ where $S$ is the gap set with
\begin{equation*}
\calG_0 = \{K \,|\,(y_K,\sigma_K) \textrm{ has } \sigma_K=+ \textrm{ and } y_K \in K\}
\end{equation*}
(i.e., $y_K \notin \partial K$) and for $K \in \calG_0$, the point in $K$ is $y_K$.

In particular, if $S$ is given and $(y,\sigma)=\{(y_K,\sigma_K)\}_{K\in\calG}$ is picked so that for $K_k \in \calG_0$, we have that $(y_{K_k},\sigma_{K_k})=(x_k,+)$ (and for $K \notin \calG_0$, $(y_K,\sigma_K)$ is arbitrary in $C_K$), then the inner factor of $K^{\frA(y,\sigma)}$ is divisible by $B_S$, i.e., if $\alpha_1 = \frA(y,\sigma)$, then $K^{\alpha_1}/B_S$ is in $\calH_{\alpha_0}$ where $\alpha_0=\alpha_1 \beta^{-1}$.  If $g \in \calH_{\alpha_0}$, then because multiplication by $B_S$ is an isometry on $H^2$, we have that
\begin{align}
  \jap{K^{\alpha_0\beta}B_S^{-1},g} &= \jap{K^{\alpha_0\beta},B_S g} \nonumber \\
                                    &= B_S(\infty)g(\infty) \lb{5.9} \\
                                    &= B_S(\infty) \jap{K^{\alpha_0},g} \lb{5.10} \\
                                    &= \jap{\overline{B_S(\infty)}K^{\alpha_0},g} \lb{5.11}
\end{align}

Since $g$ is arbitrary in $\calH_{\alpha_0}$ and both $K^{\alpha_0}$ and $K^{\alpha_0\beta}B_S^{-1}$ lie in $\calH_{\alpha_0}$, we conclude that
\begin{equation}\label{5.12}
  K^{\alpha_0\beta}(z)B_S(z)^{-1} = \overline{B_S(\infty)}K^{\alpha_0}(z)
\end{equation}
Evaluating at $z=\infty$, we find that

\begin{lemma} \lb{L5.3} \eqref{5.5} holds for $\alpha_0 = \alpha_1\beta^{-1}$ where $\alpha_1$ is the image under the Abel map of data $\{(y_K,\sigma_K)\}_{K \in \calG}$ which has $(y_{K_k},\sigma_{K_k})=(x_k,+)$ if $K_k \in \calG_0$.
\end{lemma}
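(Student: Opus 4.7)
The plan is to obtain \eqref{5.5} by directly evaluating the operator identity \eqref{5.12} at $z=\infty$. All of the heavy lifting is already in place: the Sodin--Yuditskii Abel map furnishes a character $\alpha_1$ such that $K^{\alpha_1}/B_S \in \calH_{\alpha_0}$ (with $\alpha_0 = \alpha_1\beta^{-1}$), and the fact that multiplication by $B_S$ acts isometrically on $H^2$ drove the chain \eqref{5.9}--\eqref{5.11}. What remains is essentially a bookkeeping computation, provided one verifies two positivity facts.

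First I would check that $B_S(\infty) > 0$. The individual factors $B_\fre(z,x_k)$ were normalized so that $B_\fre(\infty, x_k) > 0$, and the PW condition guarantees that the infinite product \eqref{4.2} converges to a strictly positive limit at $\infty$; hence $B_S(\infty) > 0$ and in particular $\overline{B_S(\infty)} = B_S(\infty)$. Next I would check that $K^{\alpha_0}(\infty) > 0$. Since $\fre$ is a PW set, Theorem \ref{T1.1} together with the observation after \eqref{1.15A} produces a nonzero $h \in H^\infty(\Omega, \alpha_0) \subset \calH_{\alpha_0}$ with $h(\infty) \ne 0$. Thus the evaluation functional $f \mapsto f(\infty)$ on $\calH_{\alpha_0}$ is nontrivial, and
\begin{equation*}
K^{\alpha_0}(\infty) = \jap{K^{\alpha_0}, K^{\alpha_0}} = \norm{K^{\alpha_0}}_2^2 > 0.
\end{equation*}

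With these two positivity facts in hand, I would set $z = \infty$ in \eqref{5.12}. Using $\overline{B_S(\infty)} = B_S(\infty)$, this yields
\begin{equation*}
\frac{K^{\alpha_0 \beta}(\infty)}{B_S(\infty)} = B_S(\infty)\, K^{\alpha_0}(\infty),
\end{equation*}
and dividing by $K^{\alpha_0}(\infty) > 0$ while multiplying by $B_S(\infty) > 0$ delivers
\begin{equation*}
|B_S(\infty)|^2 = B_S(\infty)^2 = \frac{K^{\alpha_0 \beta}(\infty)}{K^{\alpha_0}(\infty)},
\end{equation*}
which is precisely \eqref{5.5}.

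The only step that could conceivably cause trouble is the nonvanishing of $K^{\alpha_0}(\infty)$; without it, one cannot divide and the identification would collapse. But this is exactly what the PW hypothesis buys via Widom's Theorem \ref{T1.1}, so in the present setting the obstacle evaporates and the lemma reduces to the single line of evaluation sketched above.
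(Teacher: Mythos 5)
Your proof is correct and takes essentially the same route as the paper: the paper derives \eqref{5.12} via the chain \eqref{5.9}--\eqref{5.11} and then simply remarks ``Evaluating at $z=\infty$.'' Your added verifications that $B_S(\infty)>0$ (so $\overline{B_S(\infty)}=B_S(\infty)$) and that $K^{\alpha_0}(\infty)=\norm{K^{\alpha_0}}_2^2>0$ via the PW condition are exactly the implicit facts needed to make that evaluation legitimate, though strictly speaking only $B_S(\infty)\ne 0$ and $K^{\alpha_0}(\infty)\ne 0$ are needed, since $B_S(\infty)\,\overline{B_S(\infty)}=|B_S(\infty)|^2$ regardless of the argument of $B_S(\infty)$.
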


\begin{proof} [Proof of Theorem \ref{T5.1}] By Lemmas \ref{L5.2} and \ref{L5.3}, if $g \in H^\infty(\Omega,\beta)$ with $\norm{g}_\infty \le 1$, then
\begin{equation}\label{5.13}
  |g(\infty)|^2 \le \frac{K^{\alpha_0\beta}(\infty)}{K^{\alpha_0}(\infty)} = |B_S(\infty)|^2
\end{equation}
Thus, if $g(\infty)>0$, we have that
\begin{equation}\label{5.14}
  0 < g(\infty) \le B_S(\infty)
\end{equation}
so $B_S$ is a dual Widom maximizer.
\end{proof}
\section{Proof of the Main Theorem} \lb{s6}

In this section, we'll prove Theorem \ref{T1.3}.

\begin{proposition} \lb{P6.1} Under the hypotheses of Theorem \ref{T4.1}, we have that $L_{n_j}(z)$ (given by \eqref{1.20}) converges uniformly on compact subsets of $\wti{\Omega}$ to the Widom minimizer for the character, $\beta$, of $B_S$.
\end{proposition}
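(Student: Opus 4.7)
The plan is to exploit the decomposition \eqref{1.23}, $L_n(z) = (1 + B_n(z)^{2n}) H_n(z)$ with $H_n = M_n/M_n(\infty)$, and to pass to the limit in each factor along the subsequence $n_j$. The factor $1 + B_n^{2n}$ is easy: by \eqref{1.23A}, $|B_n|$ is uniformly bounded away from $1$ on every compact subset of $\wti\Omega$, so $1 + B_n^{2n} \to 1$ uniformly on compact subsets of $\wti\Omega$. The factor $H_n$ is more delicate: Theorem~\ref{T4.1} yields $M_{n_j} \to B_S$ uniformly on compact subsets of $\wti\Omega$ that avoid the lifts of $\{x_k\}$, and specializing to $z=\infty$ gives $M_{n_j}(\infty) \to B_S(\infty)$, which is strictly positive because the infinite Blaschke product \eqref{4.2} converges to a nonzero value at $\infty$ under the PW condition. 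Combining the two factors produces $L_{n_j} \to B_S/B_S(\infty)$ uniformly on compact subsets of $\wti\Omega \setminus \{w : \pi(w) \in \{x_k\}\}$.

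Next I would identify this limit with $F_\beta$. By Theorem~\ref{T5.1}, $B_S$ is the dual Widom maximizer $Q_\beta$ for character $\beta$, so \eqref{1.16} gives $B_S/B_S(\infty) = Q_\beta/Q_\beta(\infty) = F_\beta$, the Widom minimizer for $\beta$.

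The final—and main—obstacle is to upgrade this convergence through the lifts of $\{x_k\}$ to uniform convergence on all compact subsets of $\wti\Omega$. The factor-by-factor analysis breaks down there because $H_n$ may have genuine singular behavior at those points (for instance, zeros of $B_n^n$ on the gaps produce poles of $M_n$ that the constant $M_n(\infty)$ does not cure). To bypass this, observe that $L_n$ itself has no such issue: it is character automorphic and analytic on all of $\wti\Omega$, since the $n$th order pole of $T_n$ at $\infty$ is cancelled by the $n$th order zero of $B_\fre^n$. Moreover, it is uniformly bounded: on $\fre$, $|B_\fre| = 1$ gives $|L_n| \le t_n/C(\fre)^n \le 2\exp(PW(\fre))$ by the Totik--Widom bound \eqref{1.10}, and the maximum modulus principle (applied to $|L_n|$, which descends to a single-valued function on $\Omega$ by character automorphy, together with $L_n(\infty) = 1$) propagates this bound to all of $\wti\Omega$. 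Thus $\{L_{n_j}\}$ is a normal family on $\wti\Omega$, and a Vitali/Montel argument completes the proof: every subsequential uniform limit on compacts of $L_{n_j}$ agrees with $F_\beta$ on the dense open set $\wti\Omega \setminus \{w : \pi(w) \in \{x_k\}\}$, hence equals $F_\beta$ throughout $\wti\Omega$ by analyticity. Therefore $L_{n_j} \to F_\beta$ uniformly on every compact subset of $\wti\Omega$.
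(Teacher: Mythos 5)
Your proof is essentially the paper's own: identify the limit $B_S/B_S(\infty)=F_\beta$ on a dense open set via Theorems~\ref{T4.1} and~\ref{T5.1}, then upgrade to uniform convergence on all compacts of $\wti\Omega$ using the Totik--Widom bound together with a Vitali/Montel argument. One minor misstatement in your parenthetical: $B_n^n$ has no zeros in the gaps (its only zero is the lift of $\infty$); $M_n$ fails to be defined at the $x_k$'s because it lives on the universal cover of $(\bbC\cup\{\infty\})\setminus\fre_n$ and has discontinuous (and vanishing) boundary values where $\fre_n$ intrudes into the gaps---but since you correctly route around those points, this does not affect the validity of the argument.
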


\begin{remark} $M_n$ only converge away from the $\{x_k\}_{K_k \in \calG_0}$ because the $M_n$'s aren't analytic on $\wti{\Omega}$ but only on those points whose images under $\x$ aren't in $\fre_n$.  But $L_n$ is analytic on all of $\wti{\Omega}$ so we can hope for convergence at the $x_k$'s too.  Indeed, the $x_k$'s are limit points of zeros and the Widom minimizers vanish at those points.
\end{remark}

\begin{proof} We have that $M_{n_j}(\infty)=\left[C(\fre)/C(\fre_{n_j})\right]^{n_j}$, so by Theorem \ref{T4.1},
\begin{equation}\label{6.1}
  B_S(\infty) = \lim_{j \to \infty} \left[C(\fre)/C(\fre_{n_j})\right]^{n_j}
\end{equation}
Thus, if $H_n$ is given by \eqref{1.23}, then
\begin{equation}\label{6.2}
  H_{n_j}(z) \to B_S(z)/B_S(\infty)
\end{equation}
for $z$ near $\infty$ (in fact on compact subsets of $\wti{\Omega} \setminus \{w \,|\, \pi(w) \in \{x_k\} \}$).

Since $B_S$ is the dual Widom maximizer for $\beta$, $B_S(z)/B_S(\infty)$ is $F_\beta$, the Widom minimizer for $\beta$.  By the first equation in \eqref{1.23}, we get that $L_{n_j}(z)$ converges to $F_\beta(z)$ for $z$ near $\infty$.

By the Totik--Widom bound, $\norm{L_{n_j}}_{\infty}$ are uniformly bounded, so by Vitali's Theorem, $L_{n_j}$ converges to $F_\beta$ uniformly on compact subsets of $\wti{\Omega}$.
\end{proof}

\begin{proposition} \lb{P6.2}  Under the hypotheses of Theorem \ref{T4.1}, we have that
\begin{equation}\label{6.3}
  \lim_{j \to \infty} \norm{L_{n_j}}_\infty = 2 \norm{F_\beta}_\infty
\end{equation}
\end{proposition}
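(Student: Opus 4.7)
The plan is to prove the exact identity $\norm{L_n}_\infty = 2/M_n(\infty)$ for every $n$, and then pass to the limit along the subsequence using $M_{n_j}(\infty)\to B_S(\infty)$ from Theorem \ref{T4.1} together with Theorem \ref{T5.1}, which identifies $B_S$ as the dual Widom maximizer for the character $\beta$ and hence, via \eqref{1.16}, gives $\norm{F_\beta}_\infty = 1/B_S(\infty)$.

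For the exact identity, I would first note that $|L_n|$ is single-valued subharmonic on $\Omega$ and continuous on $\bbC\cup\{\infty\}$ (with $|L_n(\infty)|=1$), so the maximum principle yields $\norm{L_n}_\infty = \sup_{z\in\fre}|L_n(z)|$, which equals $t_n/C(\fre)^n$ because $G_\fre\equiv 0$ on the regular set $\fre$. To make the factor of $2$ transparent, I would re-derive the same quantity from the factorization \eqref{1.23}. Since $\fre\subset\fre_n$ with both sets regular, one has $|B_n|\equiv 1$ and, by \eqref{1.22}, $|M_n|\equiv 1$ on $\fre$; therefore $|H_n|$ equals the constant $1/M_n(\infty)$ on $\fre$, and
\begin{equation*}
\norm{L_n}_\infty = \frac{1}{M_n(\infty)}\sup_{z\in\fre}|1+B_n(z)^{2n}|.
\end{equation*}
The remaining supremum equals exactly $2$: at any $z_*\in\fre$ where $|T_n(z_*)|=t_n$, equation \eqref{1.19} combined with $|B_n(z_*)|=1$ forces $B_n(z_*)^n=\pm 1$, so $B_n(z_*)^{2n}=1$ and $|1+B_n(z_*)^{2n}|=2$, matching the obvious upper bound. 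Hence $\norm{L_n}_\infty = 2/M_n(\infty)$.

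For the passage to the limit, Theorem \ref{T4.1} evaluated at $z=\infty$ gives $M_{n_j}(\infty)\to B_S(\infty)>0$, while Theorem \ref{T5.1} together with \eqref{1.16} gives $\norm{F_\beta}_\infty = 1/B_S(\infty)$. Combining,
\begin{equation*}
\lim_{j\to\infty}\norm{L_{n_j}}_\infty = \lim_{j\to\infty}\frac{2}{M_{n_j}(\infty)} = \frac{2}{B_S(\infty)} = 2\norm{F_\beta}_\infty,
\end{equation*}
which is \eqref{6.3}. Given the preparatory work in Sections \ref{s4} and \ref{s5}, there is essentially no obstacle in this proof; its whole content is the geometric observation that $B_n(z_*)^{2n}=1$ at any Chebyshev extremal point $z_*\in\fre$, which is precisely what produces the factor $2$ in \eqref{1.18A}.
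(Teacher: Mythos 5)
Your argument is correct and is essentially the paper's proof: the maximum principle applied to $\log|L_n|$ gives $\norm{L_n}_\infty=\sup_{z\in\fre}|L_n(z)|=t_n/C(\fre)^n=2C(\fre_n)^n/C(\fre)^n=2/M_n(\infty)$ (the paper cites \eqref{1.6a} for $t_n=2C(\fre_n)^n$, which encodes the same geometric fact you re-derive from \eqref{1.19} at an extremal point), and then $M_{n_j}(\infty)\to B_S(\infty)=Q_\beta(\infty)=1/\norm{F_\beta}_\infty$ finishes it. The only difference is expository: you supply a second derivation of the exact identity via the factorization \eqref{1.23} to make the source of the factor $2$ transparent, which the paper leaves implicit in \eqref{1.6a}.
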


\begin{proof} Since $\log|L_{n_j}(z)|$ is harmonic on $\Omega$ away from those zeros of $T_{n_j}$ in the gaps where it goes to $-\infty$, its maximum occurs at limit points on $\fre$.  Since $|B_\fre(x)| = 1$ for $x \in \fre$, we conclude that
\begin{equation} \lb{6.4}
  \norm{L_{n_j}}_\infty = \frac{t_{n_j}}{ C(\fre)^{n_j}} = \frac{2C(\fre_{n_j})^{n_j}}{C(\fre)^{n_j}}
\end{equation}
by \eqref{1.6a}

By \eqref{6.1}, we conclude that
\begin{equation}\label{6.5}
  \lim_{j \to \infty} \norm{L_{n_j}}_\infty  = 2\left[B_S(\infty)\right]^{-1}
\end{equation}
and by \eqref{1.16}, noting that $Q_\beta = B_S$,
\begin{equation}\label{6.6}
  \left[B_S(\infty)\right]^{-1} = \norm{F_\beta}_\infty
\end{equation}
proving \eqref{6.3}.
\end{proof}

\begin{proof} [Proof of Theorem \ref{T1.3}]  By Theorem \ref{T3.2}, we have the required almost periodicity of $F_n(z)$ and $\norm{F_n}_\infty$.  By continuity of $\norm{F_\chi}_\infty$ and the Totik--Widom bound, the functions on the left of \eqref{1.18} lie in a compact set, so if the limit is not zero, by passing to suitable subsequences, we can find one whose limit is zero for which the hypotheses of Theorem \ref{T4.1} hold.  But then the limit is zero by Proposition \ref{P6.1}.  We conclude that \eqref{1.18} holds.

Again, by continuity of $\norm{F_\chi}_\infty$ and the Totik--Widom bound, the numbers on the left side of \eqref{1.18A} are bounded above and away from zero, so if \eqref{1.18A} fails we can find a subsequence for which the limit is not $2$ and for which the hypotheses of Theorem \ref{T4.1} hold.  This violates Proposition \ref{P6.2} so we conclude that \eqref{1.18A} holds.
\end{proof}


\end{document}